\documentclass[a4paper, 10pt]{amsart}
\usepackage[margin=3.75cm]{geometry}
\usepackage{amsmath,amssymb,amsthm}
\usepackage{esint}
\usepackage{amsaddr}
\usepackage{graphicx}
\usepackage{booktabs}
\usepackage[sort&compress,numbers]{natbib}
\usepackage[colorlinks=true]{hyperref}
\usepackage[capitalize,nameinlink]{cleveref}

\numberwithin{equation}{section}

\usepackage[font=footnotesize,labelfont=bf,width=\textwidth]{caption}

\hypersetup{
    linkcolor = black, 
    filecolor = blue,
    urlcolor  = blue,
    citecolor = blue,
    }
\usepackage[shortlabels]{enumitem}

\crefname{equation}{}{}

\theoremstyle{plain}
\newtheorem{theorem}{Theorem}[section]
\newtheorem{proposition}[theorem]{Proposition}
\newtheorem{lemma}[theorem]{Lemma}

\theoremstyle{definition}

\newtheorem{assumption}{Assumption}[section]
\theoremstyle{remark}
\newtheorem{remark}{Remark}[section]

\AddToHook{env/lemma/begin}{\crefalias{theorem}{lemma}}
\AddToHook{env/proposition/begin}{\crefalias{theorem}{proposition}}
\AddToHook{env/corollary/begin}{\crefalias{theorem}{corollary}}

\makeatletter
\def\@seccntformat#1{%
  \protect\textup{%
    \protect\@secnumfont
    \expandafter\protect\csname format#1\endcsname % <--- added
    \csname the#1\endcsname
    \protect\@secnumpunct
  }%
}

\newcommand{\cal}[1]{\mathcal{#1}}
\newcommand{\R}{\mathbb{R}}
\newcommand{\N}{\mathbb{N}}
\newcommand{\Z}{\mathbb{Z}}
\newcommand{\Pe}{\mathrm{Pe}}
\newcommand{\Hpp}{\cal{H}^{++}}
\newcommand{\Hpm}{\cal{H}^{+-}}
\newcommand{\Hmp}{\cal{H}^{-+}}
\newcommand{\Hmm}{\cal{H}^{--}}

\newcommand{\Hmo}{H^{-1}}
\newcommand{\Hoz}{H_0^1(\Omega)}

\newcommand{\nxi}{n}
\newcommand{\ntheta}{n}
\newcommand{\npsi}{n}

\newcommand{\QF}{\mathcal{Q}}
\newcommand{\cost}[2]{\avg{f#1 - \frac14 \abs{\nabla #1}^2} - #2}

\DeclareMathOperator{\divergence}{div}

\renewcommand{\vec}[1]{\mathbf{#1}}
\newcommand{\weakto}{\rightharpoonup}
\newcommand{\abs}[1]{\left\vert #1 \right\vert}
\newcommand{\dx}{\mathrm{d} \vec{x}}

\newcommand{\dr}{\mathrm{d} r}

\renewcommand{\div}{\divergence}%{\nabla \cdot }

\newcommand{\invlap}{\Delta^{-1}}

\newcommand{\avg}[1]{\int_\Omega #1 \,\dx}
\newcommand{\norm}[1]{\left\| #1 \right\|}
\newcommand{\e}{{\rm e}}

\newcommand{\ocpval}{\mathcal{J}}
\newcommand{\efficiency}{\mathcal{E}}
\newcommand{\lb}{\mathcal{L}}

\newcommand{\Peclet}{P\'eclet}

\title[Optimizing bounds for energy-constrained cooling in 2D]{Optimizing bounds for energy-constrained optimal cooling problems in two dimensions}
\author{Pedro Bl\"oss Braga and Giovanni Fantuzzi}
\address{
    Department of Mathematics\\
    Friedrich--Alexander Universit\"at Erlangen--N\"urnberg
}
\email{\href{mailto:pedro.bloess@fau.de}{pedro.bloess@fau.de}}
\email{\href{mailto:giovanni.fantuzzi@fau.de}{giovanni.fantuzzi@fau.de}}

\thanks{GF is supported by the DFG project MONET (grant no. 568735456). PB is supported by the DFG project \emph{Renormalized Solutions in der Optimalsteuerung von Evolutionsgleichungen} (grant no. 551486487).}

\date{\today}
\begin{document}
\begin{abstract}
We study optimal control problems for incompressible fluids in two-dimensional domains with a cold boundary and internal heat sources and sinks. Given a kinetic energy budget, measured by the square of a nondimensional \Peclet\ number $\Pe$, the goal is to maximize a cooling efficiency inversely proportional to the mean square gradient of the fluid's temperature.
Using Lagrange duality, we formulate a well-posed dual problem whose solution yields an upper bound on the maximum cooling efficiency $\efficiency(\Pe)$. We then numerically approximate the dual problem using a convergent hierarchy of semidefinite programs obtained via discretization.
We illustrate this computational approach on optimal cooling problems in a square and in an annulus, explaining also how problem symmetries can be exploited to reduce computational complexity. Finally, we construct admissible points for the dual problem to obtain new analytical upper bounds on the optimal cooling efficiency $\efficiency(\Pe)$. Specifically, we prove that $\efficiency(\Pe) \lesssim \Pe^{2}$ for arbitrary domains and heat distributions, and that $\efficiency(\Pe) \lesssim \Pe^{2}/ \ln^2\Pe$ for cooling flows in disks and annuli with heat source/sink distributions with a positive azimuthal average. These results generalize and improve known efficiency bounds for energy-constrained cooling flows in a disk.
\end{abstract}

%% Title and contents
\maketitle

%%%%%%%%%%%%%%%%%%%%%%%%%%%%%%%%%%%%%%%%%%%%%%%%%%%%%%%%%%%%%%%%%%%%%%%%%%%%%%%%
\section{Introduction}
Optimal cooling problems, in which the flow of a fluid must be controlled to efficiently cool domains with internal heat sources, 
are fundamental in fluid mechanics. They are central to the design of heat exchangers and also provide a paradigm for studying transport processes.
Locally optimal flows can usually be computed with algorithms for PDE-constrained optimization (see, e.g., \cite{Hassanzadeh2014,Alben2017a,Alben2017b,Motoki2018,Souza2020}), and provide lower bounds on the optimal cooling efficiency. Upper bounds, in contrast, are substantially more difficult to compute because they are statements about the efficiency of all possible flows.
Remarkably, the difficulty persists even in a highly simplified setting where the admissible fluid flows are steady, heat is a ‘passive’ scalar with no effect on the fluid's motion, and the only constraints on the flow are incompressibility and a bound on some norm of the fluid's velocity. In this work, we focus on two-dimensional incompressible flows with prescribed $L^2$ norm, which physically correspond to flows with a prescribed kinetic energy.

\subsection{Problem description}
Consider an incompressible fluid flowing inside a bounded, possibly multiply connected domain $\Omega \subset \R^2$ with Lipschitz boundary $\partial\Omega$. The flow is described by a time-independent and weakly divergence-free velocity field $\vec{u} \in L^2(\Omega; \R^2)$ satisfying the no-penetration condition $\vec{u}\cdot \hat{\vec{n}}=0$ on $\partial\Omega$, where $\hat{\vec{n}}$ is the outward-pointing unit vector normal to the boundary.
The domain is subject to a distribution $f \in H^{-1}(\Omega)$ of heat sources and sinks, while its boundary is held at a constant temperature of zero. The temperature $T$ of the fluid then solves the steady advection-diffusion equation
\begin{equation}\label{e:ade}
    \begin{cases}
        - \Delta T + \vec{u}\cdot \nabla T =  f &\text{on }\Omega,\\
        T=0 &\text{on }\partial\Omega.
    \end{cases}
\end{equation}

We wish to understand how well the domain can be cooled by flows that do not exceed a prescribed kinetic energy budget, measured by a nondimensional parameter $\Pe$ called the \Peclet\ number. We consider a flow efficient at cooling if the temperature $T$ is nearly constant on $\Omega$, meaning that $\smash{\avg{\abs{\nabla T}^2}}$ is small. Then, the optimal flow solves the minimization problem
\begin{equation}\label{e:ocp}\tag{OCP}
    \ocpval(\Pe):=
    \min_{\substack{
        \vec{u}\in L^2(\Omega;\R^2) \\ 
        \div\vec{u}=0 \text{ on }\Omega\\ 
        \vec{u} \cdot \hat{\vec{n}} = 0  \text{ on }\partial\Omega\\
        \norm{\vec{u}}_{L^2} \leq \,\Pe
    }}\;
    \avg{\abs{\nabla T}^2}
\end{equation}
This is a nonconvex optimal control problem for the advection-diffusion equation \cref{e:ade}, where the nonconvexity is due entirely to the bilinear term $\vec{u}\cdot\nabla T$ in this equation. We associate to the optimal control cost $\ocpval(\Pe)$ an optimal cooling efficiency via the formula
\begin{equation}\label{e:efficiency}
    \efficiency(\Pe) := \frac{\|f\|_{\Hmo}^2}{ \ocpval(\Pe)},
\end{equation}
where the norm $\|f\|_{\Hmo}$ is included to have $\efficiency(0) = 1$. In other words, we measure the optimal cooling efficiency relative to the no-flow state. 

The definitions immediately imply that $\efficiency(\Pe) \geq 1$. Better lower bounds can be found by constructing particular flows, including locally optimal ones computed with PDE-constrained optimization algorithms. Our aim is to obtain complementary upper bounds by deriving positive lower bounds on $\ocpval(\Pe)$.

\subsection{Known bounds}
When $\Omega$ is a disk and the heating is uniform ($f=1$), numerical optimization produces convection rolls achieving $\smash{\avg{\abs{\nabla T}^2}\sim \Pe^{-1}}$~\cite{Marcotte2018}. Inspired by these computations, analytical constructions for a broad class of sufficiently smooth heating distributions $f$ provide the lower bound $\efficiency(\Pe) \gtrsim \Pe$ at sufficiently large \Peclet\ numbers, with a prefactor depending only on the heating distribution~\cite[Proposition~5.1]{Tobasco2022}.
This estimate is in fact true for a time-dependent generalization of our optimal cooling problem, which we do not consider for simplicity. 

Lower bounds on optimal cooling efficiencies have also been derived for variations of problem \cref{e:ocp}, such as: optimal cooling in a disk under the `enstrophy' constraint $\smash{\norm{\vec{u}}_{H^1}\leq\Pe}$ \cite{Tobasco2022}; optimal cooling of uniformly heated flows between an insulated boundary and an isothermal boundary, with constraints on the $L^p$ and $W^{1,p}$ norms of $\vec{u}$ \cite{Iyer2022}; optimal cooling in insulated domains with balanced heat sources and sinks \cite{Shaw2007,Song2023}; and optimal cooling between differentially heated plates \cite{Hassanzadeh2014,TobascoDoering2017,DoeringTobasco2019,Kumar2024}. In all cases, lower bounds on the cooling efficiency are derived from particular flows that resemble those obtained with numerical optimization \cite{Motoki2018,Souza2020,Marcotte2018}.

Finding good upper bounds on $\efficiency(\Pe)$ has proven more difficult. The problem has received attention within a broader study of heat transport and mixing of passive scalars (see, e.g., \cite{Doering1996,Hassanzadeh2014,Motoki2018,TobascoDoering2017,DoeringTobasco2019,Souza2020,Kumar2024,Chanillo2025,Song2023,Shaw2007,Thiffeault2004,DoeringThiffeault2006,Thiffeault2012}) and upper bounds on cooling efficiencies have been derived for many of the settings described above \cite{Tobasco2022,Shaw2007,Song2023,Kumar2024}. However, they often overestimate the corresponding lower bounds. In particular, the upper bound $\efficiency(\Pe) \lesssim \Pe^2$ for the energy-constrained problem \cref{e:ocp} in a disk~\cite[Proposition~2.6]{Tobasco2022} does not match the lower bound $\efficiency(\Pe) \gtrsim \Pe$. It is then natural to wonder if the proofs of these upper bounds can be improved and, in particular, whether numerical computations can be used to guide such improvements.

\subsection{Contributions}
We combine Lagrange duality with semidefinite programming to optimize lower bounds on the optimal control cost $\ocpval(\Pe)$, which translate into upper bounds on the cooling efficiency $\efficiency(\Pe)$ via \cref{e:efficiency}. Our key insight is to recognize that existing proofs of lower bounds for $\ocpval(\Pe)$ implicitly construct feasible points for the Lagrange dual of problem \cref{e:ocp}, which is an infinite-dimensional semidefinite program (SDP). We explicitly derive this dual problem, prove its well-posedness (\cref{prop:ldp-well-posed}), and show that it can be approximated with arbitrary accuracy by finite-dimensional SDPs (\cref{thm:convergence}). These are computationally tractable for fixed $\Pe$, so the best bound provable via Lagrange duality can be computed numerically.

We also use the dual of \cref{e:ocp} to derive new analytical upper bounds on $\efficiency(\Pe)$. First, we give an elementary proof that
\begin{equation}\label{e:an-bound-intro}
    \efficiency(\Pe) \leq 1 + C \Pe^2
    \qquad \forall\, \Pe \geq 0,
\end{equation}
where $C>0$ depends only on the fluid's domain $\Omega$ (\cref{th:analytical-lb}). This result applies to arbitrary heat distributions $f$ and domains $\Omega$, generalizing a similar bound proved in \cite{Tobasco2022} when $\Omega$ is a disk. 
We then fix $\Omega$ to be a disk or an annulus and prove that, when $f$ has a strictly positive average on any circle around the domain's center,
\begin{equation}\label{e:intro-log-lb}
    \efficiency(\Pe) \leq C \, \frac{\Pe^2}{\ln^2 \Pe}
    \qquad \forall\, \Pe > 0
\end{equation}
with a constant $C$ depending on the domain and on $f$ (\cref{th:refined-annulus,th:refined-disk}). This logarithmically-improved bound holds in particular if the heating is uniform ($f=1$). Its proof is strongly inspired by the numerical results produced by our semidefinite programming computations for a uniformly heated annulus.

\subsection{Relation to prior work}
Lagrange duality is a classical approach for computing bounds on optimization problems. While it is often left implicit, it underpins most existing bounds on the cooling efficiency or other heat transport properties of incompressible flows. In particular, duality arguments similar to ours have been applied to optimal cooling problems in insulated domains with balanced heating and cooling \cite{Song2023}. What distinguishes our work is that we explicitly formulate the Lagrange dual of problem \cref{e:ocp} in a form suitable for a numerical implementation based on SDPs.

The connection between optimal cooling problems and SDPs is not surprising: discretizations of problem \cref{e:ocp} are nonconvex quadratic programs, and the Lagrange dual of a quadratic program is an SDP~\cite{Shor1987,Nesterov2000}. This fact has been exploited to compute performance bounds for quadratic-program discretizations of optimal design problems in structural mechanics and photonics~\cite{Tyburec2019,Angeris2019,Angeris2021,Gertler2025,Dalklint2025}. Our approach is similar, but we dualize before discretizing. Exchanging these operations has a twofold advantage: we can prove that SDP solutions converge to the best bound provable with Lagrange duality, and we can derive rigorous bounds analytically. 

Finally, our approach is similar to SDP-based implementations of the \emph{background method}, a variational framework for bounding the time-averaged dissipation in incompressible flows (see \cite{Doering1994,Constantin1995,Doering1996}, as well as the review \cite{Fantuzzi2022} and the many references therein). This is because the variational problem at the heart of the background method is the Lagrange dual of a quadratic program relaxation of the optimization problem for the time-averaged dissipation over flows satisfying the incompressible Navier--Stokes equations (see~\cite{Kerswell1998,Kerswell1999} for details).

\subsection{Outline}
The plan of the paper is as follows. In \cref{s:dual-problem}, we derive and analyze the Lagrange dual of problem \cref{e:ocp}. 
In particular, we construct an admissible point for the dual problem to obtain an analytical lower bound on $\ocpval(\Pe)$, which combined with \cref{e:efficiency} leads to the efficiency bound in \cref{e:an-bound-intro}. In \cref{s:discretization}, we introduce SDP discretizations of the Lagrange dual problem and prove that they converge with increasing resolution for all discretization schemes with strong approximation properties in the Sobolev space $H^1(\Omega)$. This convergence is not unexpected, but the proof requires specialized arguments due to the non-standard, `infinite-dimensional SDP' structure of the Lagrange dual problem. 
We demonstrate our numerical strategy in \cref{s:numerics} on energy-constrained cooling problems in a square and in an annulus, outlining also how problem symmetries can be exploited to reduce computational complexity. 
In \cref{s:improved-bounds}, inspired by our computations, we provide a refined analysis of optimal cooling in disks and annuli, proving the efficiency bound in \cref{e:intro-log-lb}. \Cref{s:conclusion} offers final remarks and perspectives for future work.
\section{The Lagrange dual problem}\label{s:dual-problem}

In this section, we formulate and study the Lagrange dual of the optimal cooling problem \cref{e:ocp}. To obtain this dual problem in a convenient form for both analysis and computations, in \cref{ss:streamfunction} we reformulate \cref{e:ocp} in terms of a scalar streamfunction. We derive the dual of the reformulated problem in \cref{ss:dualization}, study its optimizers in \cref{ss:well-posedness}, and use it to deduce the efficiency bound \cref{e:an-bound-intro} in \cref{ss:an-lower-bound}.

%%%%%%%%%%%%%%%%%%%%%%%%%%%%%%%%%%%%%%%%%%%%%%%%%%%%%%%%%%%%%%%%%%%%%%%%%%%%%%%%
\subsection{Streamfunction formulation}\label{ss:streamfunction}
Let $\Gamma_0$ denote the exterior boundary of the domain $\Omega$ and, if this is multiply connected, let $\Gamma_1,\ldots,\Gamma_p$ denote all other connected parts of the boundary (see \cref{f:domain-with-holes} for an illustration). Let $\cal{H}$ be the subspace of $H^1(\Omega)$ contaning functions that are constant on each piece of the boundary, that is,
\begin{equation*}
    \cal{H} := \left\{ \psi \in H^1(\Omega):\;\exists c_0,\ldots,c_p \in \R \text{ s.t. } \psi|_{\Gamma_i} = c_i ,\; 0\leq i \leq p \right\}.
\end{equation*}
Every weakly divergence-free vector field $\vec{u}\in L^2(\Omega;\R^2)$ satisfying the no-penetration boundary condition $\vec{u}\cdot \hat{\vec{n}}=0$ can be expressed as
\begin{equation}\label{e:streamfunction-def}
    \mathbf{u} = \Pe\, \nabla^\perp \psi = \Pe\left( \partial_y \psi,\, -\partial_x \psi\right)
\end{equation}
for a streamfunction $\psi \in \cal{H}$ determined up to an additive constant (see, e.g., \cite[\S3.1]{GiraultRaviart1979}). We fix this constant such that $\psi|_{\Gamma_0} = 0$, so $\cal{H}=H^1_0(\Omega)$ if $\Omega$ is simply connected. The factor of $\Pe$ in \cref{e:streamfunction-def} is included for convenience.

\begin{figure}[t]
    \centering
    \includegraphics[scale=0.9]{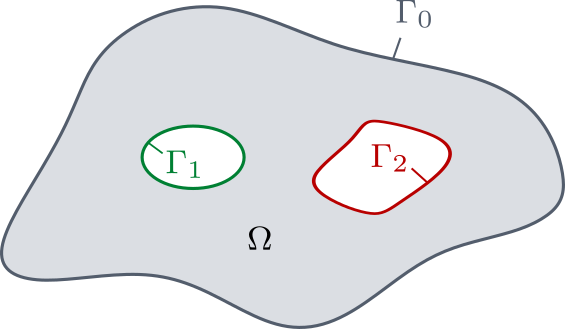}
    \caption{Illustration of a multiply connected domain $\Omega$ (shaded gray region) with exterior boundary $\Gamma_0$ (thick gray line) and interior boundaries $\Gamma_1$ (thick green line) and $\Gamma_2$ (thick red line).}
    \label{f:domain-with-holes}
\end{figure}

With this re-parametrization, the optimal cooling problem \cref{e:ocp} becomes
\begin{equation}\label{e:ocp-psi}
    \ocpval(\Pe)=
    \min_{\substack{
        \psi \in \cal{H} \\ 
        \norm{\nabla \psi}_{L^2} \leq 1
    }}\;
    \avg{\abs{\nabla T}^2},
\end{equation}
where $T$ is the unique weak solution of the advection-diffusion equation \cref{e:ade} with velocity field $\vec{u}=\Pe \nabla^\perp \psi$. Specifically, $T$ is the unique function in $H^1_0(\Omega)$ satisfying
\begin{equation}\label{e:ade-weak}
    \avg{ \nabla \xi \cdot \nabla T + \Pe\,\xi \,\nabla^\perp \psi \cdot \nabla T - f\xi}= 0 \quad \forall \xi \in H^1_0(\Omega).
\end{equation}
The existence and uniqueness of such a weak solution follows from standard arguments based on the Lax--Milgram theorem; the only difficulty is to show that the product $\xi \,\nabla^\perp \psi \cdot \nabla T$ is integrable despite the low regularity of the factors, but this is true because $\nabla^\perp \psi \cdot \nabla T = \partial_y\psi \partial_x T - \partial_x \psi \partial_y T$ is a Jacobian determinant. In particular, as discussed in  \cite[Remark~2.2]{Tobasco2022} and \cite[\S3.1]{Song2023}, analysis in \cite{Coifman1990} implies that
\begin{equation}\label{e:clms}
    \avg{ \xi \,\nabla^\perp \psi \cdot \nabla T } \leq c(\Omega) \norm{\nabla \xi}_{L^2} \norm{\nabla \psi}_{L^2} \norm{\nabla T}_{L^2}
\end{equation}
for a positive constant $c(\Omega)$ depending only on the domain $\Omega$.
This estimate will play a key role also in our numerical analysis of \cref{ss:convergence}.

%%%%%%%%%%%%%%%%%%%%%%%%%%%%%%%%%%%%%%%%%%%%%%%%%%%%%%%%%%%%%%%%%%%%%%%%%%%%%%%%
\subsection{The dual problem}\label{ss:dualization}
We now dualize the minimization problem in \cref{e:ocp-psi} to obtain a convex maximization problem whose optimal value, denoted by $\lb(\Pe)$, is a lower bound on $\ocpval(\Pe)$.  To ease the notation, we associate to every pair $(a,\xi) \in \R \times H^1_0(\Omega)$ a homogeneous quadratic form $\mathcal{Q}(a,\xi):\cal{H} \times H^1_0(\Omega) \to \R$ defined as
\begin{equation}\label{e:qf}
    \mathcal{Q}\left(a,\xi\right)[\psi,\theta] := \avg{ a|\nabla \psi|^2 + \abs{\nabla\theta}^2 - \Pe\,\xi\, \nabla^\perp \psi \cdot \nabla \theta}.
\end{equation}
Note that the map $(a,\xi) \mapsto \mathcal{Q}(a,\xi)$ is affine and that the integral is well defined thanks to estimate \cref{e:clms}. We write $\mathcal{Q}\left(a,\xi\right)\succeq 0$ to indicate that $\mathcal{Q}\left(a,\xi\right)$ is a positive semidefinite quadratic form, meaning that $\mathcal{Q}\left(a,\xi\right)[\psi,\theta] \geq 0$ for all $\psi \in \cal{H}$ and $\theta \in H^1_0(\Omega)$.

\begin{proposition}\label{th:duality}
    For every $\Pe\geq 0$, 
    the optimal value $\ocpval(\Pe)$ of problem \cref{e:ocp} satisfies $\ocpval(\Pe) \geq \lb(\Pe) \geq 0$, where
    \begin{equation}\label{e:ldp}\tag{LDP}
        \lb(\Pe) := \max_{\substack{
            a\in \R \\ 
            \xi \in H^1_0(\Omega) \\ 
            \QF\left(a,\xi\right) \succeq  0 
        }}\;
        \avg{f\xi - \frac14 \abs{\nabla \xi}^2} - a.
    \end{equation}
\end{proposition}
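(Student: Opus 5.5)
The plan is a standard Lagrangian weak-duality argument applied to the streamfunction formulation \cref{e:ocp-psi}, together with a check that the dual feasible set is nonempty with nonnegative objective.

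\emph{Step 1: weak duality.} Fix an admissible pair $(a,\xi)$ with $a\in\R$, $\xi\in H^1_0(\Omega)$ and $\QF(a,\xi)\succeq0$. Let $\psi\in\cal{H}$ satisfy $\norm{\nabla\psi}_{L^2}\le1$, and let $T\in H^1_0(\Omega)$ be the weak solution of \cref{e:ade-weak}. Testing \cref{e:ade-weak} with $\xi$ gives
\[
\avg{\nabla\xi\cdot\nabla T + \Pe\,\xi\,\nabla^\perp\psi\cdot\nabla T} = \avg{f\xi}.
\]
Every term here is finite by \cref{e:clms}. We then write
\[
\avg{|\nabla T|^2} = \avg{|\nabla T|^2} + \avg{f\xi - \nabla\xi\cdot\nabla T - \Pe\,\xi\,\nabla^\perp\psi\cdot\nabla T}.
\]
Next we substitute $\theta := T + \tfrac12\xi$ and complete the square, $|\nabla T|^2 - \nabla\xi\cdot\nabla T = |\nabla(T-\tfrac12\xi)|^2 - \tfrac14|\nabla\xi|^2$. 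The sign convention must match \cref{e:qf}, so we actually take $\theta=T-\tfrac12\xi$ and track the cross term. The term $\xi\nabla^\perp\psi\cdot\nabla\xi$ vanishes, since it equals $\tfrac12\nabla^\perp\psi\cdot\nabla(\xi^2)$, which integrates to zero because $\nabla^\perp\psi$ is divergence-free and $\xi$ vanishes on $\partial\Omega$. Carrying this through should give
\[
\avg{|\nabla T|^2} = \avg{f\xi - \tfrac14|\nabla\xi|^2} - a\norm{\nabla\psi}^2_{L^2} + \QF(a,\xi)[\psi,\theta].
\]
Possibly $\xi$ must be replaced by $-\xi$ at this point, because of the sign in front of $\Pe$. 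Since $\QF\succeq0$, this yields $\avg{|\nabla T|^2}\ge \avg{f\xi-\tfrac14|\nabla\xi|^2} - a\norm{\nabla\psi}^2$.

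\emph{Step 2: removing the constraint.} Taking $\psi=0$ in $\QF\succeq0$ is uninformative, but taking $\theta=0$ gives $a\norm{\nabla\psi}^2\ge0$ for all $\psi$, so $a\ge0$. Combined with $\norm{\nabla\psi}\le1$, this gives $a\norm{\nabla\psi}^2\le a$. Hence $\avg{|\nabla T|^2}\ge$ the dual objective. Taking the infimum over $\psi$ and the supremum over $(a,\xi)$ gives $\ocpval\ge\lb$.

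\emph{Step 3: the remaining claims.} For $\lb\ge0$ it suffices to exhibit the feasible point $(a,\xi)=(0,0)$, for which $\QF(0,0)[\psi,\theta]=\norm{\nabla\theta}^2\ge0$ and the objective equals $0$.

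The main obstacle is justifying that the supremum in \cref{e:ldp} is attained, since the statement writes max. I expect to defer this to \cref{prop:ldp-well-posed}. If it must be argued here, I would take a maximizing sequence, bound $\norm{\nabla\xi_n}$ using the coercive $-\tfrac14|\nabla\xi|^2$ term together with $a_n\ge0$, and bound $a_n$ from above via the objective. Weak compactness then gives a limit. Finally, the closedness of the constraint $\QF\succeq0$ under weak convergence of $\xi_n$ follows for fixed $(\psi,\theta)$, because $\xi\mapsto\avg{\xi\,\nabla^\perp\psi\cdot\nabla\theta}$ is a bounded linear functional on $H^1_0$ by \cref{e:clms}.
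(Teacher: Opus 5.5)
Your proof is correct, but it reaches the inequality by a different route than the paper.

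The paper derives \cref{e:ldp} constructively. It writes $\ocpval(\Pe)$ as an $\inf$--$\sup$ of a Lagrangian and swaps the two. It then minimizes explicitly over $T$ through the Euler--Lagrange equation, which produces $\invlap(\nabla\xi\cdot\nabla^\perp\psi)$. Next it reintroduces $\theta$ through a variational representation of the resulting negative term. Finally it observes that the inner infimum over $(\psi,\theta)$ is $0$ or $-\infty$ according to whether $\QF(a,\xi)\succeq 0$.

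You instead take the dual as given and certify weak duality for each primal--dual pair by an exact identity. Your hesitation about signs is unnecessary. With $\theta=T-\tfrac12\xi\in\Hoz$, expanding $\QF(a,\xi)[\psi,\theta]$ and using \cref{e:ade-weak} tested with $\xi$ gives
\[
\avg{\abs{\nabla T}^2}
= \Bigl(\avg{f\xi-\tfrac14\abs{\nabla\xi}^2}-a\Bigr)
+ a\bigl(1-\norm{\nabla\psi}_{L^2}^2\bigr)
+ \QF(a,\xi)\bigl[\psi,\,T-\tfrac12\xi\bigr]
\]
with no replacement $\xi\mapsto-\xi$. This holds once $\avg{\xi\,\nabla^\perp\psi\cdot\nabla\xi}=0$ is justified, as for \cref{e:el-2}, by approximating $\xi$ with smooth compactly supported functions and using \cref{e:clms}.

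What each route buys:
\begin{itemize}
\item Your route is more elementary. It avoids the $\inf$--$\sup$ interchange, the explicit optimal temperature and the inverse Laplacian. It also exhibits the duality gap as a sum of nonnegative terms, so tightness forces $a(1-\norm{\nabla\psi}_{L^2}^2)=0$ and $\QF(a,\xi)[\psi,T-\tfrac12\xi]=0$.
\item The paper's route explains where \cref{e:ldp} comes from. It shows that \cref{e:ldp} is exactly the Lagrange dual, since the dual function equals $-\infty$ off the constraint set. Your verification presupposes this rather than derives it.
\end{itemize}

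Deferring attainment of the maximum to \cref{prop:ldp-well-posed} is also what the paper does.
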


\begin{remark}
    For $\Pe > 0$, we expect the inequality $\ocpval(\Pe) \geq \lb(\Pe)$ to be strict in general because the optimal cooling problem \cref{e:ocp} is not convex. For $\Pe=0$, both \cref{e:ocp} and \cref{e:ldp} are trivially solvable and attain the same value (see also \cref{rem:Pe=0} below).
\end{remark}

\begin{remark}\label{rem:symmetrization-1}
    The constraint $\QF\left(a,\xi\right) \succeq  0$ requires $a\geq 0$ and we can in fact restrict the optimization in \cref{e:ldp} to $a>0$. We can then set $a=\alpha^2$ with $\alpha > 0$ and perform the change of variables $\psi \mapsto \psi / \sqrt{\alpha}$, $\theta \mapsto \theta \sqrt{\alpha}$ to conclude that $\QF\left(a,\xi\right)$ is positive semidefinite if and only if so is the `balanced' quadratic form
    \begin{equation}\label{e:qf-symmetrized}
    \mathcal{Q}'\left(\alpha,\xi\right)[\psi,\theta] := \avg{ \alpha|\nabla \psi|^2 + \alpha\abs{\nabla\theta}^2 - \Pe\,\xi\, \nabla^\perp \psi \cdot \nabla \theta}.
    \end{equation}
    Consequently,
    \begin{equation*}
        \lb(\Pe) = \max_{\substack{
            \alpha \in \R \\ 
            \xi \in H^1_0(\Omega) \\ 
            \QF'\left(\alpha,\xi\right) \succeq  0 
        }}\;
        \avg{f\xi - \frac14 \abs{\nabla \xi}^2} - \alpha^2.
    \end{equation*}
    We will use this reformulation in \cref{s:improved-bounds} to prove \cref{e:intro-log-lb}. We also remark that the feasible set of this balanced problem formulation is a cone in $\R\times\Hoz$, but exploring the computational implications of this fact is beyond the scope of our present work.
\end{remark}

\begin{remark}\label{rem:symmetrization-2}
    When $\Omega$ is simply connected, $\cal{H}=\Hoz$ and the balanced quadratic form $\QF'(a,\xi)$ satisfies $\mathcal{Q}'\left(a,\xi\right)[\psi,\theta] = \mathcal{Q}'\left(a,\xi\right)[\theta,-\psi]$. As demonstrated in \cref{ss:example-square}, this identity can afford computational savings if combined with other problem symmetries.
\end{remark}

\begin{proof}[Proof of \cref{th:duality}]
    The temperature $T$ in \cref{e:ocp} is the unique function in $H^1_0(\Omega)$ satisfying \cref{e:ade-weak}. Viewing the test function $\xi$ in that equation as a Lagrange multiplier for the advection-diffusion equation \cref{e:ade}, and introducing a Lagrange multiplier $a\geq 0$ for the constraint $\norm{\nabla\psi}_{L^2}\leq 1$, we can write
    \begin{align*}
        \ocpval(\Pe)
        &= 
        \inf_{\substack{
        \psi \in \cal{H} \\ 
        T \in H^1_0(\Omega)
        }}\;
        \sup_{\substack{a \in \R \\ \xi \in H^1_0(\Omega) \\ a\geq 0}}
        \avg{\abs{\nabla T}^2 + a |\nabla \psi|^2 + (\nabla\xi - \Pe\,\xi\,\nabla^\perp\psi )\cdot \nabla T + f\xi} - a
        \\
        &\geq 
        \sup_{\substack{a \in \R \\ \xi \in H^1_0(\Omega) \\ a\geq 0}}
        \inf_{\substack{
        \psi \in \cal{H} \\ 
        T \in H^1_0(\Omega)
        }}\;
        \avg{\abs{\nabla T}^2 + a |\nabla \psi|^2 + (\nabla\xi - \Pe\,\xi\,\nabla^\perp\psi ) + f\xi} - a.
    \end{align*}
    The inequality on the second line is due to switching minimization and maximization.

    Next, we perform the inner minimization over $T$ for fixed $\psi$, $a$ and $\xi$. The optimal temperature $T^*$ solves the Euler--Lagrange equation
    $-2\Delta T^* - \Delta \xi + \Pe \, \nabla \xi \cdot \nabla^\perp \psi = 0$
    with boundary condition $T^* = 0$. Writing $\invlap$ for the inverse Laplacian with homogeneous Dirichlet boundary conditions, we obtain
    \begin{equation}\label{e:Topt}
        T^* = - \frac12\xi + \frac{\Pe}{2}\ \invlap(\nabla \xi \cdot \nabla^\perp \psi).
    \end{equation}
    To compute the value of the minimum, we multiply the Euler--Lagrange equation by $T^*$ and integrate by parts over $\Omega$ to find that
    \begin{equation}\label{e:el-1}
    \avg{ (\nabla\xi - \Pe\,\xi\,\nabla^\perp\psi )\cdot \nabla T^*} = -2 \avg{\abs{\nabla T^*}^2}.
    \end{equation}
    We then observe that
    \begin{equation}\label{e:el-2}
        \avg{\nabla \xi \cdot \nabla \invlap(\nabla \xi \cdot \nabla^\perp \psi)}
        = -\avg{ \xi \nabla \xi \cdot \nabla^\perp \psi}
        = 0,
    \end{equation}
    where the both equalities follows from integration by parts using the divergence-free nature of $\nabla^\perp \psi$ and the homogeneous Dirichlet boundary conditions of $\xi$. Using \cref{e:Topt}, \cref{e:el-1} and \cref{e:el-2}, we obtain
    \begin{align*}
        \ocpval(\Pe)
        \geq 
        \sup_{\substack{a \in \R \\ \xi \in H^1_0(\Omega) \\ a\geq 0}}
        \inf_{\substack{ %% begin constraints
        \psi \in \cal{H}
        }}\; %% end constraints
        \avg{
            a |\nabla \psi|^2 
            - \frac14 \abs{\nabla \xi}^2 
            - \frac{\Pe^2}{4} \abs{\nabla \invlap(\nabla \xi \cdot \nabla^\perp \psi)}^2 
            + f\xi
        } 
        - a.
    \end{align*}

    We now remove the term involving the inverse Laplacian using the identity 
    \begin{equation*}
        -\avg{ \frac{\Pe^2}{4} \abs{\nabla \invlap(\nabla \xi \cdot \nabla^\perp \psi)}^2 } = \min_{\theta \in H^1_0(\Omega)} \avg{ \abs{\nabla\theta}^2 - \Pe\,\xi\,\nabla^\perp \psi \cdot \nabla\theta },
    \end{equation*}
    arriving at the lower bound
    \begin{align*}
        \ocpval(\Pe)
        \geq 
        \sup_{\substack{a \in \R \\ \xi \in H^1_0(\Omega) \\ a\geq 0}}
        \inf_{\substack{ %% begin constraints
        \psi \in \cal{H} \\ 
        \theta \in H^1_0(\Omega)
        }}\; %% end constraints
        \avg{a\abs{\nabla\psi}^2 + \abs{\nabla\theta}^2 - \Pe\,\xi\,\nabla^\perp\psi \cdot \nabla\theta - \frac14 \abs{\nabla \xi}^2 + f\xi} - a.
    \end{align*}
    The minimax problem on the right-hand side of this inequality is equivalent to the maximization problem \cref{e:ldp} because the inner minimum is zero if the quadratic form $\QF(a,\xi)$ defined in \cref{e:qf} is positive semidefinite, and is negative infinity otherwise. Therefore, the outer maximization selects pairs $(a,\xi)$ satisfying $\QF(a,\xi)\succeq 0$. Since this constraint automatically implies that $a\geq 0$, the latter condition need not be enforced in \cref{e:ldp} and we conclude that $\ocpval(\Pe) \geq \lb(\Pe)$. Finally, $\lb(\Pe)\geq 0$ because the choices $a=0$ and $\xi=0$ are feasible for \cref{e:ldp} and have a zero objective value.
\end{proof}

%%%%%%%%%%%%%%%%%%%%%%%%%%%%%%%%%%%%%%%%%%%%%%%%%%%%%%%%%%%%%%%%%%%%%%%%%%%%%%%%
\subsection{Optimizers and strictly feasible near-optimizers}\label{ss:well-posedness}
In stating problem \cref{e:ldp}, we tacitly assumed that the maximum is attained. We prove this fact next, establishing also that the maximizer is unique.

\begin{proposition}\label{prop:ldp-well-posed}
    The maximization problem \cref{e:ldp} has a unique optimal solution.
\end{proposition}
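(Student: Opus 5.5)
We argue by the direct method of the calculus of variations, using the structure of \cref{e:ldp}: the objective is concave and the feasible set is convex. Write $F(a,\xi) := \avg{f\xi - \frac14\abs{\nabla\xi}^2} - a$ and $\mathcal{K} := \{(a,\xi)\in\R\times\Hoz : \QF(a,\xi)\succeq 0\}$.

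\emph{Convexity and closedness of $\mathcal{K}$.} For each fixed $(\psi,\theta)\in\cal{H}\times\Hoz$, the map $(a,\xi)\mapsto \QF(a,\xi)[\psi,\theta]$ is affine. By \cref{e:clms} it is continuous with respect to weak convergence in $\R\times\Hoz$. Indeed, $\xi\mapsto \avg{\xi\,\nabla^\perp\psi\cdot\nabla\theta}$ is a bounded linear functional on $\Hoz$, with norm at most $c(\Omega)\norm{\nabla\psi}_{L^2}\norm{\nabla\theta}_{L^2}$. Hence $\mathcal{K}$ is an intersection of weakly closed half-spaces, so it is convex and weakly closed. It is nonempty, since $(0,0)\in\mathcal{K}$.

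\emph{Coercivity and existence.} By \cref{th:duality}, $\sup_{\mathcal{K}} F\ge 0$. Take a maximizing sequence $(a_n,\xi_n)$. Testing $\QF$ with $\theta=0$ gives $a_n\ge 0$. Then
\[
F(a_n,\xi_n)\le \norm{f}_{\Hmo}\norm{\nabla\xi_n}_{L^2}-\tfrac14\norm{\nabla\xi_n}_{L^2}^2 - a_n .
\]
Since $F(a_n,\xi_n)$ stays bounded below, both $\norm{\nabla\xi_n}_{L^2}$ and $a_n$ are bounded. Extract a subsequence with $a_n\to a^*$ and $\xi_n\weakto\xi^*$ in $\Hoz$. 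The limit lies in $\mathcal{K}$ by weak closedness. Moreover, $F$ is weakly upper semicontinuous, because $f\xi$ is a linear term and $\norm{\nabla\xi}^2_{L^2}$ is weakly lower semicontinuous. Therefore $(a^*,\xi^*)$ is a maximizer, which justifies writing ``max'' in \cref{e:ldp}.

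\emph{Uniqueness.} $F$ is strictly concave in $\xi$ but only affine in $a$. Suppose $(a_1,\xi_1)$ and $(a_2,\xi_2)$ are both maximizers. Their midpoint lies in $\mathcal{K}$ by convexity. If $\xi_1\ne\xi_2$, strict concavity of $-\frac14\norm{\nabla\xi}_{L^2}^2$ makes $F$ at the midpoint strictly exceed the optimal value, a contradiction. So $\xi_1=\xi_2=\xi^*$.

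For fixed $\xi^*$, the set of $a$ with $\QF(a,\xi^*)\succeq0$ is an up-set, since increasing $a$ only adds the nonnegative term $a\norm{\nabla\psi}^2_{L^2}$. It is also closed. As $F$ is strictly decreasing in $a$, the optimal $a$ must be the minimal feasible value, namely
\[
a^*=\sup_{\psi\in\cal{H},\,\theta\in\Hoz,\ \norm{\nabla\psi}_{L^2}=1}\Big(\avg{\Pe\,\xi^*\nabla^\perp\psi\cdot\nabla\theta-\abs{\nabla\theta}^2}\Big),
\]
which is finite by \cref{e:clms}. Hence $a^*$ is uniquely determined by $\xi^*$, and the maximizer is unique.

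The main obstacle is the weak closedness of $\mathcal{K}$. It rests entirely on the compensated-compactness bound \cref{e:clms}, which ensures the bilinear term defines a bounded (hence weakly continuous) linear functional of $\xi$ for every fixed test pair $(\psi,\theta)$. Without it, the pointwise product $\xi\,\nabla^\perp\psi\cdot\nabla\theta$ need not even be integrable. The remaining steps are routine once this bound is in hand.
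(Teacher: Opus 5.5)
Your proposal is correct and follows essentially the same route as the paper: a maximizing sequence bounded using $a_n\ge 0$ and the $H^{-1}$ estimate, a weak limit that is feasible because the bilinear term is a bounded linear functional of $\xi$ by \cref{e:clms}, weak upper semicontinuity of the objective, and uniqueness from strict concavity in $\xi$ followed by identifying $a^*$ as the smallest feasible value for the fixed $\xi^*$. Your version is slightly more explicit than the paper's, since you invoke convexity of the feasible set for the midpoint argument, which the paper leaves implicit.
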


\begin{proof}
    Let $\{(a_n,\xi_n)\}_{n\in\N}$ be a maximizing sequence for \cref{e:ldp}. The semidefiniteness of $\QF(a_n,\xi_n)$ implies that $a_n\geq 0$. We may also assume that $\smash{\cost{\xi_n}{a_n}} \geq 0$ because $\lb(\Pe)\geq 0$. Then, using the H\"older, Poincar\'e and Young inequalities, we can find $C>0$ such that
    \begin{align}\label{e:boundedness}
        a_n + \frac14\norm{\nabla \xi_n}_{L^2}^2 
        \leq C \norm{f}_{\Hmo} \norm{\nabla \xi_n}_{L^2} 
        \leq 2C^2\norm{f}_{\Hmo}^2 + \frac18\norm{\nabla \xi_n}_{L^2}^2
    \end{align}
    for every $n$. This inequality implies that the sequence $\{(a_n,\xi_n)\}_{n\in\N}$ is uniformly bounded, so we may replace it with a subsequence converging to a limit $(a^*,\xi^*)$ weakly in $\R \times H^1_0(\Omega)$. 
    
    We claim that this limit is an optimizer for \cref{e:ldp}. To see that it is feasible, recall from \cref{e:qf} that the map $(a,\xi)\mapsto \QF(a,\xi)$ is affine, so for every $\psi\in\cal{H}$ and every $\theta \in H^1_0(\Omega)$ we have
    \begin{equation*}
        \QF(a^*,\xi^*)[\psi,\theta] = \lim_{n\to\infty}  \underbrace{\QF(a_n,\xi_n)[\psi,\theta]}_{\geq 0} \geq 0
    \end{equation*}
    by the definition of weak convergence.
    For optimality, instead, observe that the function $(a,\xi)\mapsto \smash{\cost{\xi}{a}}$ is the sum of a linear function and a negative multiple of the squared $H^1_0$ norm, so it is weakly upper semicontinuous in $\R \times H^1_0(\Omega)$. Then,
    \begin{equation*}
        \lb(\Pe) = \lim_{n\to\infty} \cost{\xi_n}{a_n} \leq \cost{\xi^*}{a^*} \leq \lb(\Pe),
    \end{equation*}
    from which we conclude that $(a^*,\xi^*)$ is optimal for \cref{e:ldp}.

    Finally, we show that $(a^*,\xi^*)$ is the only optimal solution. The strict concavity of the cost function with respect to $\xi$ implies that if $(a',\xi')$ is another optimizer for \cref{e:ldp}, then $\xi'=\xi^*$. But then, we must also have $a'=a^*$ because the optimality of these constants for \cref{e:ldp} implies that
    \begin{align*}
        a' 
        = 
        \sup_{\psi,\theta}
        \frac{\avg{\Pe\,\xi'\,\nabla^\perp\psi\cdot\nabla\theta - \abs{\nabla\theta}^2}}{\avg{\abs{\nabla \psi}^2}}
        =
        \sup_{\psi,\theta}
        \frac{\avg{\Pe\,\xi^*\,\nabla^\perp\psi\cdot\nabla\theta - \abs{\nabla\theta}^2}}{\avg{\abs{\nabla \psi}^2}}
        = a^*,
    \end{align*}
    where the suprema are taken over $\psi \in \cal{H}$ and $\theta \in \Hoz$. 
\end{proof}

%%%%%%%%%%%%%%%%%%%%%%%%%%%%%%%%%%%%%%%%%%%%%%%%%%%%%%%%%%%%%%%%%%%%%%%%%%%%%%%%
Finally, with a view towards the numerical analysis of \cref{e:ldp} in \cref{ss:convergence}, we show that problem \cref{e:ldp} admits nearly optimal solutions for which the quadratic form $\QF(a,\xi)$ is positive definite.

\begin{proposition}\label{prop:near-optimizers}
    Fix $\Pe\geq 0$. For every $\varepsilon \in [0,1]$, there exist $a \in \R$ and $\xi \in H^1_0(\Omega)$ such that $\smash{\cost{\xi}{a} \geq \lb(\Pe) - \varepsilon}$ and such that
    \begin{gather*}
        \QF(a, \xi)[\psi,\theta] \geq  \frac{\varepsilon}{1+\lb(\Pe)}\avg{\abs{\nabla\psi}^2 + \abs{\nabla\theta}^2}
        \quad\forall(\psi,\theta) \in \cal{H} \times \Hoz.
    \end{gather*}
\end{proposition}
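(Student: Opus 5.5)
The plan is to perturb the optimizer $(a^*,\xi^*)$ from \cref{prop:ldp-well-posed} by shrinking $\xi^*$ and enlarging $a^*$. The case $\varepsilon=0$ is just the optimizer itself, so assume $\varepsilon\in(0,1]$. For $t\in[0,1]$ and $\delta>0$ to be chosen, consider the candidate $(a,\xi) = ((1-t)a^* + \delta,\, (1-t)\xi^*)$. This is the convex combination of the optimizer with the feasible point $(0,0)$, shifted in $a$ by $\delta$.

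By affinity of $(a,\xi)\mapsto\QF(a,\xi)$, we have $\QF((1-t)a^*,(1-t)\xi^*) = (1-t)\QF(a^*,\xi^*) + t\,\QF(0,0)$. Here $\QF(0,0)[\psi,\theta]=\avg{\abs{\nabla\theta}^2}$. Adding $\delta$ to $a$ contributes $\delta\avg{\abs{\nabla\psi}^2}$. Since $\QF(a^*,\xi^*)\succeq 0$, we obtain
\[
\QF(a,\xi)[\psi,\theta] \geq t\avg{\abs{\nabla\theta}^2} + \delta\avg{\abs{\nabla\psi}^2} \geq \min\{t,\delta\}\avg{\abs{\nabla\psi}^2+\abs{\nabla\theta}^2}.
\]

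Next we estimate the objective. The map $\xi\mapsto \cost{\xi}{a}$ is concave, so along the segment we get
\[
\avg{f(1-t)\xi^* - \tfrac14(1-t)^2\abs{\nabla\xi^*}^2} - (1-t)a^* - \delta \geq (1-t)\lb(\Pe) - \delta \geq \lb(\Pe) - t\,\lb(\Pe) - \delta.
\]
The first inequality uses $(1-t)^2\le 1-t$, which is concavity applied between $0$ and $\xi^*$.

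Now choose $t=\delta=\varepsilon/(1+\lb(\Pe))$. This lies in $[0,1]$ because $\varepsilon\le 1$ and $\lb(\Pe)\ge0$. The loss in the objective is then $t\lb(\Pe)+\delta = \varepsilon$, which gives the first claim. The coercivity constant is $\min\{t,\delta\}=\varepsilon/(1+\lb(\Pe))$, which gives the second claim.

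The only step needing care is the coercivity in $\psi$. Scaling toward $(0,0)$ alone only yields coercivity in $\theta$, because $\QF(0,0)$ has no $\psi$ term. This is why the separate shift $\delta$ in $a$ is needed; since the objective depends on $a$ linearly, the shift costs exactly $\delta$. Everything else is routine, and the bound is sharp enough that no use of \cref{e:clms} is required.
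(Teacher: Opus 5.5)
Your proposal is correct and is essentially the paper's proof. The paper takes $\xi=(1-\delta)\xi^*$ and $a=(1-\delta)a^*+\delta$ with $\delta=\varepsilon/(1+\lb(\Pe))$, which is your construction with $t=\delta$, and it uses the same identity $\QF(a,\xi)[\psi,\theta]=(1-\delta)\QF(a^*,\xi^*)[\psi,\theta]+\delta\avg{\abs{\nabla\psi}^2+\abs{\nabla\theta}^2}$. Your use of $(1-t)^2\le 1-t$ to get an inequality for the objective is the accurate form of the paper's step: the paper writes an equality there, which in fact holds only up to the nonnegative term $\frac14\delta(1-\delta)\norm{\nabla\xi^*}_{L^2}^2$.
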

\begin{proof}
    Let $(a^*,\xi^*)$ be the optimizer for \cref{e:ldp}. For every $\delta \in [0,1]$, set $\xi = (1-\delta) \xi^*$ and $a = (1-\delta) a^* + \delta$. Straightforward calculations show that
    \begin{equation*}
        \cost{\xi}{a} = \lb(\Pe) - \delta \left(1 + \lb(\Pe) \right)
    \end{equation*}
    and
    \begin{align*}
        \QF(a_\varepsilon,\xi_\varepsilon)[\psi,\theta] 
        &= (1-\delta)\QF(a^*,\xi^*)[\psi,\theta] 
        + \delta \avg{\abs{\nabla\psi}^2 + \abs{\nabla\theta}^2}.
    \end{align*}
    Since $\QF(a^*,\xi^*)$ is positive semidefinite, the claim follows upon taking $\smash{\delta = \frac{\varepsilon}{1+\lb(\Pe)}}$, which is an admissible choice because $\lb(\Pe)\geq 0$.
\end{proof}

%%%%%%%%%%%%%%%%%%%%%%%%%%%%%%%%%%%%%%%%%%%%%%%%%%%%
\subsection{An explicit optimal cooling efficiency bound}\label{ss:an-lower-bound}
We conclude this section by using the dual problem \cref{e:ldp} to prove the optimal cooling efficiency bound in \cref{e:an-bound-intro}. The proof relies on the identities 
\begin{equation*}
    \norm{f}_{\Hmo} 
    := \sup_{\varphi \in \Hoz} \frac{\abs{\avg{f \varphi}}}{\avg{\abs{\nabla \varphi}^2}} 
    = \left( \avg{ \abs{ \nabla \invlap f }^2 } \right)^\frac12,
\end{equation*}
where $\invlap$ is the inverse Laplacian with homogeneous Dirichlet boundary conditions. 

\begin{theorem}\label{th:analytical-lb}
    Let $c=c(\Omega)$ be the constant in estimate \cref{e:clms}. For every $\Pe\geq 0$,
    \begin{equation}\label{e:an-bound-theorem}
        \ocpval(\Pe)\geq \lb(\Pe)\geq \frac{\|f\|_{\Hmo}^2}{ 1 + c^2 \Pe^2}.
    \end{equation}
    Consequently, $\efficiency(\Pe) \leq 1 + c^2 \Pe^2$.
\end{theorem}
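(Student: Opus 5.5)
Since the first inequality in \cref{e:an-bound-theorem} is already given by \cref{th:duality}, the plan is to exhibit a feasible point $(a,\xi)$ of \cref{e:ldp} whose objective value equals the right-hand side. I take $\xi$ to be a multiple of the no-flow optimizer, namely $\xi = 2\lambda\,\phi$ with $\phi := -\invlap f \in \Hoz$, so that $-\Delta \phi = f$, and $\lambda\in[0,1]$ is to be chosen. Then $\avg{f\xi} = 2\lambda\avg{\abs{\nabla\phi}^2} = 2\lambda\norm{f}_{\Hmo}^2$ and $\frac14\avg{\abs{\nabla\xi}^2} = \lambda^2\norm{f}_{\Hmo}^2$. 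The objective therefore equals $(2\lambda-\lambda^2)\norm{f}_{\Hmo}^2 - a$.

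The key step is to find the smallest $a$ making $\QF(a,\xi)\succeq0$. By \cref{e:clms},
\begin{equation*}
\Pe\avg{\xi\,\nabla^\perp\psi\cdot\nabla\theta} \le 2\lambda c\,\Pe\,\norm{\nabla\phi}_{L^2}\norm{\nabla\psi}_{L^2}\norm{\nabla\theta}_{L^2}.
\end{equation*}
Young's inequality bounds this by $\norm{\nabla\theta}_{L^2}^2 + \lambda^2c^2\Pe^2\norm{f}_{\Hmo}^2\norm{\nabla\psi}_{L^2}^2$. Hence the choice $a = \lambda^2 c^2\Pe^2\norm{f}_{\Hmo}^2$ is admissible. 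I should check that \cref{e:clms} is used with $\xi\in\Hoz$ in the first slot and $\psi\in\cal{H}$, which is exactly the form in which it is stated. This step is the only nontrivial input, and it is supplied by the compensated-compactness estimate.

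With this choice the objective becomes $\norm{f}_{\Hmo}^2\,\bigl(2\lambda - \lambda^2(1+c^2\Pe^2)\bigr)$. Maximizing over $\lambda$ gives $\lambda = 1/(1+c^2\Pe^2)\in(0,1]$, and the objective equals $\norm{f}_{\Hmo}^2/(1+c^2\Pe^2)$. Since \cref{e:ldp} is a maximum, this value is a lower bound on $\lb(\Pe)$, which proves \cref{e:an-bound-theorem}. The efficiency bound then follows directly from definition \cref{e:efficiency}.

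I do not expect a real obstacle. The only care points are the sign convention for $\invlap$, which is harmless since any sign flip can be absorbed into $\lambda$, and the use of the $\Hmo$ norm identity stated just before the theorem.
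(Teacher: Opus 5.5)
Your proposal is correct and is essentially the paper's proof: with $\lambda = 1/(1+c^2\Pe^2)$, your $\xi = -2\lambda\invlap f$ and $a = \lambda^2 c^2\Pe^2\norm{f}_{\Hmo}^2$ are exactly the paper's choices, and your Young's inequality step is the paper's perfect-square argument based on \cref{e:clms}. The only difference is presentational: you obtain the optimal scaling $\lambda$ by maximizing over a one-parameter family, whereas the paper states the optimized values directly.
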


\begin{remark}
    This result extends to general domains $\Omega$ and arbitrary $\Pe$ the lower bound on $\ocpval(\Pe)$ obtained in \cite[Proposition~2.6]{Tobasco2022} when $\Omega$ is a disk and $\Pe$ is sufficiently large. The analysis in \cite{Tobasco2022}, however, applies also to time-dependent optimal cooling problems. We expect a version of \cref{th:analytical-lb} to hold for time-dependent problems, too.
\end{remark}

\begin{remark}\label{rem:Pe=0}
    The inequalities in \cref{th:analytical-lb} are equalities if $\Pe=0$, because in this case problem \cref{e:ocp} is trivially solved by $\vec{u}=0$, giving $T=-\invlap f$ and thus $\ocpval(0) = \|f\|_{\Hmo}^2$.
\end{remark}

\begin{proof}
    The bound on $\efficiency(\Pe)$ follows from \cref{e:efficiency,e:an-bound-theorem}.
    The first inequality in \cref{e:an-bound-theorem} is ensured by \cref{th:duality}. To prove the second one, let us fix
    $\smash{\xi = -2 \invlap f /(1 + c^2 \Pe^2)}$ and $\smash{a = c^2 \Pe^2 \norm{f}_{\Hmo}^2 / (1 + c^2 \Pe^2)^2}$.
    The identities 
    \begin{equation}\label{e:xi2f}
        c \,\Pe \norm{\nabla \xi}_{L^2} = \frac{2 c \,\Pe}{1 + c^2 \Pe^2} \norm{ f }_{\Hmo} = 2\sqrt{a}
    \end{equation}
    imply that
    $a \norm{\nabla \psi}_{L^2}^2 + \norm{\nabla \theta}_{L^2}^2 - c\, \Pe \norm{ \nabla \xi }_{L^2} \norm{\nabla \psi}_{L^2} \norm{\nabla \theta}_{L^2} \geq 0$   
    because the expression on the left-hand side is a perfect square. This inequality ensures that the quadratic form $\QF(a,\xi)$ in \cref{e:qf} is positive definite by estimate \cref{e:clms}, so the chosen $a$ and $\xi$ are admissible for problem \cref{e:ldp}.
    We then conclude that
    \begin{align*}
        \lb(\Pe) 
        &\geq \avg{ f\xi - \frac14 \abs{\nabla\xi}^2} - a \\
        &= -\frac{2}{ 1 + c^2 \Pe^2} \avg{ f\invlap f} - \frac{\norm{ f }_{\Hmo}^2}{(1 + c^2 \Pe^2)^2} 
        - \frac{ c^2 \Pe^2 }{\left(1 + c^2 \Pe^2 \right)^2} \norm{f}_{\Hmo}^2\\
        &= \frac{\|f\|_{\Hmo}^2}{ 1 + c^2 \Pe^2}.
    \end{align*}
   Here, we used the first identity in \cref{e:xi2f} and the definition of $a$ to obtain the second line, and the identity $\smash{\avg{f \invlap f} = - \norm{f}_{\Hmo}^2}$ to obtain the last line.
\end{proof}
\section{SDP discretizations}\label{s:discretization}

We now show that the optimal solution of problem \cref{e:ldp} can be approximated numerically by solving a hierarchy of increasingly large SDPs obtained via discretization.

\subsection{A general discretization scheme}
To approximate problem \cref{e:ldp} numerically, we discretize the space $\Hoz$ in which the function $\xi$ is optimized as well as the space $\cal{H}\times \Hoz$ on which the quadratic form $\QF(a,\xi)$ must be positive semidefinite. We do so using a generic conforming Galerkin discretization scheme, which in practice could be a pseudo-spectral or a finite element method.

Specifically, for each $n \in \N$, we introduce $n$-dimensional subspaces\footnote{We take these subspaces to have the same dimension for notational simplicity, but there are no obstacles to considering subspaces with different dimensions. In this case, the convergence results stated in \cref{ss:convergence} apply with $n = \min\left\{\dim \Xi_n,\, \dim \Theta_n,\, \dim \Psi_n \right\}$.}
\begin{align*}
    \Xi_n &\subset \Hoz, &
    \Theta_n &\subset \Hoz, &
    \Psi_n &\subset \cal{H}.
\end{align*}
Then, we define the discrete version of \cref{e:ldp} to be
\begin{equation}\label{e:ldp-discrete}
    \lb_n(\Pe) := \max_{\substack{a\in\R \\ \xi_n \in \Xi_n \\ \QF_n(a,\xi_n)\succeq 0}} \
    \avg{f\xi_n - \frac14 \abs{\nabla \xi_n}^2} - a,
\end{equation}
where $\QF_n$ is the restriction of the quadratic form $\QF$ to $\Psi_n \times \Theta_n$ and the constraint $\QF_n(a,\xi_n)\succeq 0$ simply means that $\QF(a,\xi_n)[\psi_n, \theta_n]\geq 0$ for all $\psi_n \in \Psi_n$ and $\theta_n \in \Theta_n$. The existence of a unique maximizer for \cref{e:ldp-discrete} follows exactly as in \cref{prop:ldp-well-posed}, and $\lb_n(\Pe)\geq 0$ because the choices $a=0$ and $\xi_n = 0$ are admissible for \cref{e:ldp-discrete}.

We stress that $\lb_n(\Pe)$ only approximates the optimal value $\lb(\Pe)$ of \cref{e:ldp}, but is neither an upper bound nor a lower bound. This is because we restrict the feasible set of \cref{e:ldp} by optimizing $\xi$ only over the subspace $\Xi_n$, but at the same time we relax the constraint $\QF(a,\xi)\succeq 0$ by imposing it only over $\Psi_n \times \Theta_n$. Lower bounds on $\lb(\Pe)$ could be computed using null Lagrangian translations or dual `occupation measure' relaxations~\cite{Chernyavsky2023,Valmorbida2016,Korda2022}, but we do not presently know if these lower bounds converge. In contrast, as we prove in \cref{ss:convergence}, the approximations given by \cref{e:ldp-discrete} converge under mild and reasonable assumptions on the subspaces $\Xi_n$, $\Theta_n$ and $\Psi_n$.

\subsection{Explicit SDP formulation}
The discrete dual problem \cref{e:ldp-discrete} is an SDP with a quadratic objective function. To make this evident, we introduce a basis $\{u_1,\ldots,u_n\}$ for $\Xi_n$, a basis $\{v_1,\ldots,v_n\}$ for $\Theta_n$, a basis $\{w_1,\ldots,w_n\}$ for $\Psi_n$, and expand
\begin{align*}
        \xi_n(\vec{x}) &= \sum_{i=1}^{\nxi} z_i u_i(\vec{x}), &
        \theta_n(\vec{x}) &= \sum_{i=1}^{\ntheta} t_i v_i(\vec{x}), &
        \psi_n(\vec{x}) &= \sum_{i=1}^{\npsi} p_i w_i(\vec{x}).
\end{align*}
We collect the expansion coefficients in $n$-dimensional column vectors $\vec{z}$, $\vec{t}$ and $\vec{p}$, define the $n$-dimensional column vector $\vec{f}=(\smash{\avg{f u_1}},\ldots,\smash{\avg{f u_n}})$, and consider the $n \times n$ matrices $A$, $B$, $C$ and $D_1,\ldots,D_n$ with entries
\begin{align*}
    A_{jk} &= \avg{\nabla u_j \cdot \nabla u_k}, &
    B_{jk} &= \avg{\nabla v_j \cdot \nabla v_k}, \\
    C_{jk} &= \avg{\nabla w_j \cdot \nabla w_k}, &
    (D_i)_{jk} &= \avg{u_i \nabla v_k \cdot \nabla^\perp w_j}.
\end{align*}
Using these quantities, we can write
${\avg{f\xi_n} = \vec{f}^\top \vec{z} }$, ${\avg{ \abs{\nabla \xi_n}^2 } = \vec{z}^\top A \vec{z}}$, and
\begin{equation*}
    \QF(a,\xi_n)[\psi_n, \theta_n] 
    =
    \begin{bmatrix}
        \vec{p} \\ \vec{t}
    \end{bmatrix}^\top
    \begin{bmatrix}
        a B &\ -\frac\Pe2 \sum_{i} z_i D_i \\
        -\frac\Pe2 \sum_{i} z_i D_i^\top & C
    \end{bmatrix}
    \begin{bmatrix}
        \vec{p} \\ \vec{t}
    \end{bmatrix}.
\end{equation*}
This quantity is nonnegative for all $\vec{p}$ and $\vec{t}$ (equivalently, for all $\psi_n\in\Psi_n$ and $\theta_n \in \Theta_n$) if and only if the $2\times 2$ block matrix on the right-hand side is positive semidefinite. Therefore, the `abstract' discrete problem \cref{e:ldp-discrete} is equivalent to
\begin{align}\label{e:sdp}
    \max_{\substack{a \in \R \\ \vec{z}\in\R^n}} \quad
    &\vec{f}^\top \vec{z} - \frac14 \vec{z}^\top A \vec{z} - a
    \\\nonumber \text{s.t}\quad &
     \begin{bmatrix}
        a B &\ -\frac\Pe2 \sum_{i} z_i D_i \\
        -\frac\Pe2 \sum_{i} z_i D_i^\top & C
    \end{bmatrix}
    \succeq 0.
\end{align}
This is a semidefinite program with a concave quadratic objective and can be solved by software packages such as \texttt{Clarabel}~\cite{Clarabel2024}. Linear conic program solvers such as \texttt{Mosek}~\cite{mosek} can be used after lifting the quadratic term $\vec{z}^\top A \vec{z}$ into a second-order cone constraint.

\subsection{Convergence analysis}\label{ss:convergence}
We now prove that the optimal value and optimizer of the discrete dual problem \cref{e:ldp-discrete} converge to the optimal value and optimizer of \cref{e:ldp} as the dimension of the subspaces $\Xi_n$, $\Theta_n$ and $\Psi_n$ increases. The only requirement, stated precisely in \cref{ass:density} below, is the existence of `recovery sequences' with elements in these subspaces that approximate functions in $\Hoz$ and $\cal{H}$. This approximation property is typical of pseudo-spectral discretization spaces or finite element space.

\begin{assumption}[Existence of recovery sequences]\label{ass:density}
The following conditions hold:
\begin{enumerate}[a), noitemsep, widest={a)}, leftmargin=*]
    \item $\forall \xi \in \Hoz$, $\exists \{\xi_n\}_{n\in\N}$ with $\xi_n \in \Xi_n$ such that $\lim_{n\to\infty}\|\xi_n - \xi\|_{\Hoz} = 0$.
    \item $\forall \theta \in \Hoz$, $\exists \{\theta_n\}_{n\in\N}$ with $\theta_n \in \Theta_n$ such that $\lim_{n\to \infty}\|\theta_n - \theta\|_{\Hoz} = 0$.
    \item $\forall  \psi \in \cal{H}$, $\exists \{\psi_n \}_{n \in \N}$ with $\psi_n \in \Psi_n$ such that $\lim_{n\to\infty}\|\psi_n - \psi\|_{H^1(\Omega)} = 0$.
\end{enumerate}
\end{assumption}

To simplify the presentation, we also assume that the integrals required to set up problem \cref{e:ldp-discrete} can be computed exactly. This assumption is mild, and quadrature errors that decrease sufficiently fast with $n$ could be accounted for if desired. With these assumptions in place, we are ready to state and prove the main result of this section.

\begin{theorem}\label{thm:convergence}
   Fix $\Pe\geq 0$. Let $\lb(\Pe)$ and $(a^*,\xi^*)$ be the optimal value and optimizer of problem \cref{e:ldp}. For each $n\in \N$, let $\lb_n(\Pe)$ and $(a^*_n,\xi^*_n)$ be the optimal value and optimizer of problem \cref{e:ldp-discrete}. If \cref{ass:density} holds, then 
   \begin{enumerate}[{\rm a)}, noitemsep, widest={a)}, leftmargin=*]
       \item\label{item:L-conv}  $\lb_n(\Pe) \to \lb(\Pe)$ as $n\to\infty$,
       \item\label{item:a-conv}  $a_n^* \to a^*$ as $n\to\infty$,
       \item\label{item:xi-conv}  $\xi_n^* \to \xi^*$ strongly in $\Hoz$ as $n\to\infty$.
   \end{enumerate}
\end{theorem}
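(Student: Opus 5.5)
The argument is a $\Gamma$-convergence style liminf/limsup pair: one inequality comes from strictly feasible near-optimizers, the other from weak compactness of discrete optimizers.

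\emph{Limsup side: $\liminf_n \lb_n(\Pe) \geq \lb(\Pe)$.} Fix $\varepsilon\in(0,1]$ and take $(a,\xi)$ from \cref{prop:near-optimizers}. Then $\QF(a,\xi) \geq \eta \avg{\abs{\nabla\psi}^2+\abs{\nabla\theta}^2}$ with $\eta=\varepsilon/(1+\lb(\Pe))>0$. By \cref{ass:density}a), choose $\xi_n\in\Xi_n$ with $\xi_n\to\xi$ in $\Hoz$. For $\psi_n\in\Psi_n$ and $\theta_n\in\Theta_n$, affinity of $\QF$ and the estimate \cref{e:clms} give
\begin{equation*}
\QF(a,\xi_n)[\psi_n,\theta_n] \geq \eta\avg{\abs{\nabla\psi_n}^2+\abs{\nabla\theta_n}^2} - c\,\Pe\,\norm{\nabla(\xi_n-\xi)}_{L^2}\norm{\nabla\psi_n}_{L^2}\norm{\nabla\theta_n}_{L^2}.
\end{equation*}
This is nonnegative once $c\,\Pe\norm{\nabla(\xi_n-\xi)}_{L^2}\le 2\eta$, so $(a,\xi_n)$ is feasible for \cref{e:ldp-discrete} for all large $n$. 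Its objective converges to that of $(a,\xi)$, which is at least $\lb(\Pe)-\varepsilon$. Letting $\varepsilon\to0$ proves the inequality. Strict feasibility is exactly what lets the small perturbation be absorbed uniformly over the subspaces.

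\emph{Liminf side: $\limsup_n \lb_n(\Pe)\le\lb(\Pe)$.} Since $\lb_n\ge0$, the estimate \cref{e:boundedness} applies verbatim to $(a_n^*,\xi_n^*)$, so these are bounded in $\R\times\Hoz$. Pass to a subsequence realizing the limsup and converging weakly to some $(\bar a,\bar\xi)$. To show feasibility of the limit, fix $\psi\in\cal{H}$ and $\theta\in\Hoz$, and take recovery sequences $\psi_n\to\psi$ and $\theta_n\to\theta$ strongly from \cref{ass:density}b),c). We have $0\le \QF(a_n^*,\xi_n^*)[\psi_n,\theta_n]$, and we must pass to the limit. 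The quadratic terms converge because $a_n^*\to\bar a$ and $\psi_n,\theta_n$ converge strongly. For the cubic term, split
\begin{equation*}
\avg{\xi_n^*\nabla^\perp\psi_n\cdot\nabla\theta_n}-\avg{\bar\xi\nabla^\perp\psi\cdot\nabla\theta}
= \avg{\xi_n^*(\nabla^\perp\psi_n\cdot\nabla\theta_n-\nabla^\perp\psi\cdot\nabla\theta)} + \avg{(\xi_n^*-\bar\xi)\nabla^\perp\psi\cdot\nabla\theta}.
\end{equation*}
The first piece is bounded via \cref{e:clms} by the uniformly bounded $\norm{\nabla\xi_n^*}$ times strongly vanishing differences, so it tends to zero. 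The second piece vanishes because $\xi\mapsto\avg{\xi\nabla^\perp\psi\cdot\nabla\theta}$ is a bounded linear functional on $\Hoz$, again by \cref{e:clms}, and $\xi_n^*\weakto\bar\xi$. Hence $\QF(\bar a,\bar\xi)\succeq0$. Weak upper semicontinuity of the objective, as in \cref{prop:ldp-well-posed}, then gives $\limsup\lb_n\le$ objective$(\bar a,\bar\xi)\le\lb(\Pe)$. This proves a).

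\emph{Parts b) and c).} The previous step shows every weak subsequential limit $(\bar a,\bar\xi)$ is feasible with objective at least $\lb(\Pe)$, so it is optimal. By uniqueness (\cref{prop:ldp-well-posed}) it equals $(a^*,\xi^*)$, and therefore the whole sequence converges weakly, which gives $a_n^*\to a^*$ since $\R$ is finite-dimensional. For strong convergence, write objective $=\avg{f\xi}-\tfrac14\norm{\nabla\xi}^2-a$. The objective values converge by a), and $\avg{f\xi_n^*}\to\avg{f\xi^*}$ and $a_n^*\to a^*$. Hence $\norm{\nabla\xi_n^*}_{L^2}\to\norm{\nabla\xi^*}_{L^2}$, and weak convergence plus convergence of norms in the Hilbert space $\Hoz$ yields strong convergence.

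The main obstacle is the passage to the limit in the cubic Jacobian term under merely weak convergence of $\xi_n^*$. It is handled by testing with strongly convergent recovery sequences and using the compensated-compactness bound \cref{e:clms}.
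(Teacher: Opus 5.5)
Your proposal is correct and follows essentially the same route as the paper: $\lb(\Pe)\le\liminf_n\lb_n(\Pe)$ via the strictly feasible near-optimizers of \cref{prop:near-optimizers} and a recovery sequence for $\xi$, then feasibility of weak limits of the discrete optimizers via recovery sequences for $(\psi,\theta)$ and the estimate \cref{e:clms}, followed by weak upper semicontinuity, uniqueness of the optimizer, and norm convergence to upgrade to strong convergence in $\Hoz$. You also justify explicitly that $\avg{(\xi_n^*-\bar\xi)\nabla^\perp\psi\cdot\nabla\theta}\to0$, since by \cref{e:clms} this is a bounded linear functional on $\Hoz$, which fills in a step the paper leaves implicit.
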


\begin{proof}
The estimates in \cref{e:boundedness} guarantee that the sequences $\{a^*_n\}$ and $\{\xi^*_n\}$ are uniformly bounded in $\R$ and $\Hoz$, respectively, so there exist subsequences $\smash{\{a^*_{n_i}\}}$ and $\smash{\{\xi^*_{n_i}\}}$ such that $\smash{a^*_{n_i}\to a'}$ and $\smash{\xi^*_{n_i}} \weakto \xi'$ weakly in $\Hoz$ for some limit points $a'$ and $\xi'$. We claim, and will prove later, that
\begin{subequations}
    \begin{gather}
        \label{e:claim1}
        \lb(\Pe) \leq \liminf_{n\to\infty} \lb_n(\Pe) \\
        \label{e:claim2}
        \QF(a',\xi')\succeq 0.
    \end{gather}
\end{subequations}
The second claim means that the pair $(a',\xi')$ is feasible for \cref{e:ldp}. Then, since the function $(a,\xi)\mapsto \smash{\cost{\xi}{a}}$ is weakly upper semicontinuous, we can start from \cref{e:claim1} to estimate
\begin{align}\label{e:liminf-limsup-combo}
    \lb(\Pe)
    &\leq \liminf_{n\to\infty} \lb_n(\Pe)\\ \nonumber
    &\leq \liminf_{n_i\to\infty} \cost{\xi_{n_i}^*}{a_{n_i}^*}\\ \nonumber
    &\leq \limsup_{n_i\to\infty} \cost{\xi_{n_i}^*}{a_{n_i}^*}\\ \nonumber
    &\leq \cost{\xi'}{a'}
    \\ \nonumber
    &
    \leq \lb(\Pe).\phantom{\int_\Omega}
\end{align}
These inequalities must be equalities, so the weak subsequence limits $a'$ and $\xi'$ are optimizers of \cref{e:ldp}. Since this problem has unique optimizers $a^*$ and $\xi^*$, and since the arguments above apply to every convergent subsequence of discrete optimizers, we conclude that $a_n^* \to a^*$ and $\xi^*_n \weakto \xi^*$ weakly in $\Hoz$. We also conclude that the inequalities in \cref{e:liminf-limsup-combo} hold without passing to subsequences, so $\lb_n(\Pe)\to\lb(\Pe)$. 

To complete the proof, we now need to prove the claimed inequalities \cref{e:claim1} and \cref{e:claim2}, as well as the strong convergence $\xi^*_n \to \xi^*$ in $\Hoz$. 

To prove strong convergence, we observe that the weak convergence $\smash{\xi^*_n \weakto \xi^*}$ and the limit $\smash{\lb_n(\Pe)\to\lb(\Pe)}$ imply that $\smash{\avg{\abs{\nabla\xi^*_n}^2}\to \avg{\abs{\nabla\xi^*}^2}}$. Then, since $\smash{\Hoz}$ is a uniformly convex Banach space, $\{\xi_n^*\}$ converges strongly (see, e.g., \cite[Proposition~3.32]{Brezis2011}).

Next, we prove inequality \cref{e:claim1}. Fix an arbitrary $\varepsilon \in (0,1)$ and let $(a_\varepsilon,\xi_\varepsilon)$ be a strictly feasible near-optimizer for \cref{e:ldp} constructed using \cref{prop:near-optimizers}. By \cref{ass:density}, there exist $\xi_n \in \Xi_n$ such that $\xi_n \to \xi_\varepsilon$ strongly in $\Hoz$. We claim that the pair $(a_\varepsilon, \xi_n)$ is feasible for the discrete dual problem \cref{e:ldp-discrete} for all sufficiently large $n$. Indeed, for every pair $(\psi,\theta) \in \Psi_n \times \Theta_n \subset \cal{H}\times \Hoz$ we can use \cref{e:clms} to estimate
\begin{align*}
    \QF(a_\varepsilon, \xi_n)[\psi,\theta] 
    &= \QF(a_\varepsilon, \xi_\varepsilon)[\psi,\theta] + \Pe \avg{(\xi_\varepsilon-\xi_n) \nabla^\perp \psi \cdot \nabla\theta}
    \\
    &\geq \frac{\varepsilon}{1+\mathcal{L(\Pe)}} \avg{\abs{\nabla\psi}^2 + \abs{\nabla\theta}^2} - 
    c \Pe \norm{\nabla \xi_\varepsilon - \nabla \xi_n}_{L^2} \norm{\nabla \psi}_{L^2} \norm{\nabla\theta}_{L^2}
    \\
    &\geq \left(\frac{\varepsilon}{1+\mathcal{L(\Pe)}} - \frac{c\Pe}{2} \norm{\nabla \xi- \nabla \xi_n}_{L^2} \right)\avg{\abs{\nabla\psi}^2 + \abs{\nabla\theta}^2}
\end{align*}
and last quantity is positive for large $n$ because $\xi \to \xi_n$ in $\Hoz$. We then estimate
\begin{align*}
    \liminf_{n \to \infty}\lb_n(\Pe)
    &\geq \liminf_{n \to \infty} \cost{\xi_n}{a_\varepsilon} 
    \\
    &= \cost{\xi_\varepsilon}{a_\varepsilon}
    \geq \lb(\Pe) - \varepsilon
\end{align*}
and let $\varepsilon \to 0$ to obtain \cref{e:claim1}.

Finally, we establish \cref{e:claim2}. Fix $\psi \in \cal{H}$ and $\theta \in \Hoz$. We need to show that $\QF(a', \xi')[\psi, \theta] \geq 0$. To do this, we use \cref{ass:density} to construct `recovery sequences' $\{\psi_n\}$ and $\{\theta_n\}$ with $\psi_n\in \Psi_n$ and $\theta_n \in \Theta_n$ that converge strongly to $\psi$ and $\theta$. Then, for each $n$, we apply the triangle inequality to estimate
\begin{align}\label{e:Q-estimate}
    \Big| \QF(a', \xi')[\psi, \theta] - \QF(a_n^*, \xi_n^*)[\psi, \theta] \Big|
    \leq&
    \phantom{+\;}
    \abs{  \avg{a' |\nabla^{\perp} \psi|^2 
        -  a_n^* |\nabla^{\perp} \psi_n|^2} }
    \\ \nonumber
    &+ \abs{ \avg{ \abs{\nabla \theta}^2 - \abs{\nabla \theta_n}^2}}
    \\ \nonumber
    &+ \abs{ \avg{ \xi' \,\nabla^{\perp} \psi \cdot \nabla \theta
            - \xi^*_n \,\nabla^{\perp} \psi_n \cdot \nabla \theta_n  } } \Pe.
\end{align}
The first two terms on the right-hand side tend to zero with increasing $n$ because $a_n^* \to a'$, $\psi_n \to \psi$ strongly, and $\theta_n \to \theta$ strongly. The last term also tends to zero, but showing this requires more care because the convergence $\xi_n^* \weakto \xi'$ is only weak. (We cannot rely on strong convergence because we derived strong convergence using \cref{e:claim2}, which is what we are proving.) To handle this difficulty, we use the triangle inequality and estimate
\begin{align*}
    \abs{ 
        \avg{ \xi' \nabla^{\perp} \psi \cdot \nabla \theta
        - \xi^*_n \nabla^{\perp} \psi_n \cdot \nabla \theta_n
        }
    }
    \leq & 
    \phantom{+\;}
    \abs{
        \avg{
            (\xi' - \xi_n^*) \nabla^{\perp} \psi \cdot \nabla \theta
        }
    }
    \\
    &+
    \abs{
        \avg{
            \xi^*_n \nabla^{\perp} (\psi - \psi_n) \cdot \nabla \theta
        }
    }
    \\ 
    &+ 
    \abs{
        \avg{
            \xi^*_n \nabla^{\perp}\psi_n \cdot \nabla (\theta - \theta_n)
        }
    }
    .
\end{align*}
We then apply estimate \cref{e:clms} to the last two terms in this inequality and obtain
\begin{align*}
    \abs{ 
        \avg{ \xi' \nabla^{\perp} \psi \cdot \nabla \theta
        - \xi^*_n \nabla^{\perp} \psi_n \cdot \nabla \theta_n
        }
    }
    \lesssim &
    \phantom{+\;}
    \abs{
        \avg{
            (\xi' - \xi_n^*) \nabla^{\perp} \psi \cdot \nabla \theta
        }
    }
    \\
    &+ 
    \| \nabla \xi_n^* \|_{L^2}
    \| \nabla \psi - \nabla\psi_n \|_{L^2} 
    \| \nabla \theta \|_{L^2}  
    \\
    & + 
    \| \nabla \xi_n^* \|_{L^2}
    \| \nabla \psi_n \|_{L^2} 
    \| \nabla \theta - \theta_n \|_{L^2}
    .
\end{align*}
The weak convergence $\xi_n^* \weakto \xi'$ in $\smash{\Hoz}$, the strong convergence $\psi_n\to \psi$ in $\smash{H^1(\Omega)}$, and the strong convergence $\theta_n \to \theta$ in $\smash{\Hoz}$ ensure that all three terms on the right-hand side of this inequality tend to zero. Thus, the last term on the right-hand side of \cref{e:Q-estimate} vanishes with increasing $n$. Since the first two terms in that inequality also vanish, we deduce that
\begin{equation*}
    \QF(a', \xi')[\psi, \theta] = 
    \lim_{n \to \infty}  \; \QF(a_n^*, \xi_n^*)[\psi_n, \theta_n]
    \geq 0,
\end{equation*}
where the last inequality holds because $a_n^*$ and $\xi_n^*$ are feasible for \cref{e:ldp-discrete}. We have thus proved \cref{e:claim2}, completing the proof of \cref{thm:convergence}.
\end{proof}
\section{Numerical demonstrations}\label{s:numerics}

We demonstrate the computational approach of \cref{s:discretization} on two examples. In the first, we optimize bounds on the optimal cost $\ocpval(\Pe)$ of problem \cref{e:ocp} for two heat distributions in the square $\Omega=\smash{(-1,1)^2}$. In the second, we consider a uniformly heated annulus. 
In both cases, we exploit symmetries in the domain's geometry and in the heat distribution to replace the dual problem \cref{e:ldp} with equivalent problems, whose SDP discretizations have a lower computational complexity. Our convergence analysis extends to these symmetry-reduced formulations with straightforward modifications. We implemented all SDPs in Julia using \texttt{JuMP}~\cite{jump} and the interior-point solver \texttt{Mosek}~\cite{mosek}.

%%%%%%%%%%%%%%%%%%%%%%%%%%%%%%%%%%%%%%%%%%%%%%%%%%%%%%%%
\subsection{Optimal cooling in a square}\label{ss:example-square}
For the square domain $\Omega = (-1,1)^2$, we consider two heat source distributions, 
$f(\vec{x})=1$ and $f(\vec{x}) = 1 + \cos(\frac\pi2 x_1) \cos(\frac{3\pi}{2} x_2)$. 
The streamfunction space is $\cal{H} = \Hoz$ because the domain is simply connected.

\subsubsection{Symmetry reduction}
\label{ss:example-square-symm}
Since the domain and the heat distributions we consider are invariant under the transformations $(x_1, x_2)\mapsto(-x_1, x_2)$ and $(x_1, x_2)\mapsto(x_1, -x_2)$, we can to simplify problem \cref{e:ldp} before discretizing it into an SDP. Specifically, we decompose $H^1_0(\Omega)$ into the direct sum of the subspaces
\begin{gather*}
    \Hpp = \left\{u \in H^1_0(\Omega): u(-x_1, x_2)=u(x_1, -x_2)=u(x_1, x_2)\right\},\\
    \Hpm = \left\{u \in H^1_0(\Omega): u(-x_1, x_2)=u(x_1, x_2),\, u(x_1, -x_2)=-u(x_1, x_2)\right\},\\
    \Hmp = \left\{u \in H^1_0(\Omega): u(-x_1, x_2)=-u(x_1, x_2),\, u(x_1, -x_2)=u(x_1, x_2)\right\},\\
    \Hmm = \left\{u \in H^1_0(\Omega): u(-x_1, x_2)=u(x_1, -x_2)=-u(x_1, x_2)\right\},
\end{gather*}
and we rewrite problem \cref{e:ldp} as
\begin{align}\label{e:ldp-square-symm}
    \lb(\Pe) = \max_{\substack{a \in \R \\ \xi \in \Hpp}}\quad
    &\int_\Omega f\xi - \frac14 \abs{\nabla \xi}^2 \,\dx - a
    \\
    \nonumber
    \text{s.t.}\quad
    &\QF\left(a,\xi\right)[\psi,\theta] \geq 0 \quad \forall (\psi,\theta) \in \Hpp \times \Hmm,\\\nonumber
    &\QF\left(a,\xi\right)[\psi,\theta] \geq 0 \quad \forall (\psi,\theta) \in \Hpm \times \Hmp,\\\nonumber
    &\QF\left(a,\xi\right)[\psi,\theta] \geq 0 \quad \forall (\psi,\theta) \in \Hmm \times \Hpp,\\\nonumber
    &\QF\left(a,\xi\right)[\psi,\theta] \geq 0 \quad \forall (\psi,\theta) \in \Hmp \times \Hpm.
\end{align}
Indeed, the restriction of the optimization to $\xi \in \Hpp$ is warranted by a standard symmetrization argument and the convexity of \cref{e:ldp}. Given this restriction, it suffices to check that $\QF\left(a,\xi\right)\succeq 0$ on the listed combinations of subspaces because, for all other combinations, the symmetries of $\psi$ and $\theta$ imply that $\QF\left(a,\xi\right)[\psi,\theta]=0$.

The last two constraints in \cref{e:ldp-square-symm} are actually redundant. To see why, recall that the optimal $a$ is strictly positive. Then, given $\psi \in \Hmm$ and $\theta \in \Hpp$, we have that $\QF(\psi,\theta)=\QF(\psi',\theta')$ for $\psi'=\theta / \sqrt{a}\in \Hpp$ and $\theta'=-\sqrt{a}\psi \in \Hmm$, so the first and third constraints are equivalent. The same is true for the second and fourth constraints. In our implementation, therefore, we consider only the first two constraints. This is an example of the computational savings mentioned in \cref{rem:symmetrization-2}.

\subsubsection{Discretization scheme}
To discretize problem \cref{e:ldp-square-symm}, we use a spectral method based on expansions with respect to the eigenfunctions of the Dirichlet Laplacian on $\Omega$, which are simply sinusoidal functions. Specifically, given a positive integer $n\in \N$, we replace the spaces $\Hpp$, $\Hpm$, $\Hmp$ and $\Hmm$ with their $n^2$-dimensional subspaces
\begin{gather*}
    \Hpp_n = \bigg\{
        \sum_{\alpha,\beta\leq n} 
            \hat{u}_{\alpha\beta}
            \cos\tfrac{(2\alpha-1) \pi x}{2}
            \cos\tfrac{(2\beta-1) \pi y}{2}
            :\; \hat{u}_{\alpha\beta} \in \R
        \bigg\}
    \\
    \Hpm_n = \bigg\{
        \sum_{\alpha,\beta\leq n} 
            \hat{u}_{\alpha\beta}
            \cos\tfrac{(2\alpha-1) \pi x}{2}
            \sin\left(\beta \pi y\right)
            :\; \hat{u}_{\alpha\beta} \in \R
        \bigg\}
    \\
    \Hmp_n = \bigg\{
        \sum_{\alpha,\beta\leq n} 
            \hat{u}_{\alpha\beta}
            \sin\left(\alpha \pi x\right)
            \cos\tfrac{(2\beta-1) \pi y}{2}
            :\; \hat{u}_{\alpha\beta} \in \R
        \bigg\}
    \\
    \Hmm_n = \bigg\{
        \sum_{\alpha,\beta\leq n} 
            \hat{u}_{\alpha\beta}
            \sin\left(\alpha \pi x\right)
            \sin\left(\beta \pi y\right)
            :\; \hat{u}_{\alpha\beta} \in \R
        \bigg\}
\end{gather*}
With these choices, problem \cref{e:ldp-square-symm} (without the last two redundant constraints) is approximated by an SDP with $n^2+1$ optimization variables and two LMIs of size $2n^2 \times 2n^2$.

\subsubsection{Results}
For each of the two heat distributions reported at the start of this section, we solved SDP discretizations of the symmetry-reduced problem \cref{e:ldp-square-symm} at $\Pe=1$, $10$ and $20$ with discretization parameter $n$ ranging from $1$ to $30$. The corresponding lower bounds $\lb_n(\Pe)$ are reported in \cref{tab:square-results} and converge quickly as $n$ is increased. One would expect such a fast convergence from a spectral discretization, even though the analysis in \cref{s:discretization} does not provide any convergence rates. It is also clear from the table that higher values of $\Pe$ necessitate a higher resolution for a fixed level of accuracy.

\begin{table}[t]
    \centering
    \caption{
        \label{tab:square-results}
        Lower bounds $\lb_n(\Pe)$ for optimal cooling in a square, computed at $\Pe=1$,$10$ and $20$ for two heat distributions $f$ and increasing values of the discretization parameter $n$.
    }
    \begin{tabular}{c c ccc c ccc}
        \toprule
        && \multicolumn{3}{c}{$f(\vec{x}) = 1$}
        && \multicolumn{3}{c}{$f(\vec{x}) = 1 + \cos(\frac\pi2 x_1) \cos(\frac{3\pi}{2} x_2)$}
        \\
        \cmidrule{3-5} \cmidrule{7-9}
        $n$
        && $\lb_n(1)$  & $\lb_n(10)$  & $\lb_n(20)$ 
        && $\lb_n(1)$  & $\lb_n(10)$  & $\lb_n(20)$ 
        \\[0.75ex]
          1 && 0.530145 & 0.365785 & 0.188599 && 0.530145 & 0.365785 & 0.188599\\ 
          2 && 0.554924 & 0.424603 & 0.270803 && 0.562662 & 0.425173 & 0.261577\\ 
          3 && 0.558572 & 0.441014 & 0.300804 && 0.568036 & 0.433713 & 0.290004\\ 
          4 && 0.559561 & 0.443606 & 0.314503 && 0.569504 & 0.436696 & 0.304565\\ 
          5 && 0.559932 & 0.444797 & 0.319255 && 0.570054 & 0.437968 & 0.309230\\[0.75ex] % easier to read
         10 && 0.560286 & 0.445523 & 0.322775 && 0.570578 & 0.438676 & 0.312156\\ 
         15 && 0.560323 & 0.445563 & 0.322890 && 0.570633 & 0.438733 & 0.312266\\ 
         20 && 0.560333 & 0.445572 & 0.322903 && 0.570646 & 0.438746 & 0.312282\\ 
         25 && 0.560336 & 0.445576 & 0.322907 && 0.570651 & 0.438751 & 0.312287\\ 
         30 && 0.560337 & 0.445577 & 0.322908 && 0.570653 & 0.438753 & 0.312289\\ 
        \bottomrule
    \end{tabular}
\end{table}

\begin{figure}
    \centering
    \includegraphics[scale=0.92]{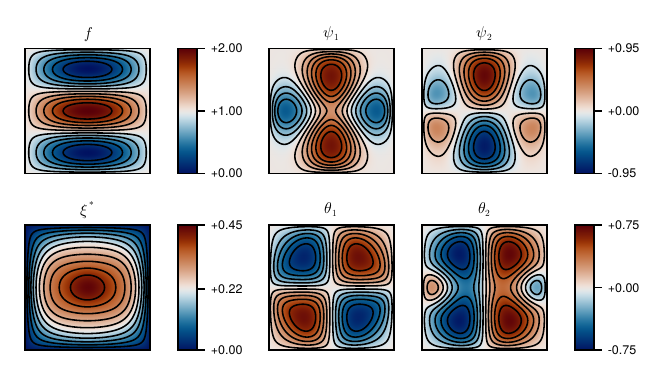}
    \caption{
    \label{fig:square-sin-critical-fields-Pe10}
    Results for optimal cooling in a square with a sinusoidal heat distribution, obtained at $Pe=10$ with discretization parameter $n=30$.
    \emph{Top-left:} The heat distribution $f$.
    \emph{Bottom-left:} The optimal field $\xi^*$ for \cref{e:ldp-square-symm}.
    \emph{Top-right and bottom-right:} Critical pairs $(\psi_i, \theta_i)$, for which the first two constraints $\QF(\psi,\theta)\geq 0$ in \cref{e:ldp-square-symm} are active.
    }
\end{figure}
\begin{figure}
    \centering
    \includegraphics[scale=0.92]{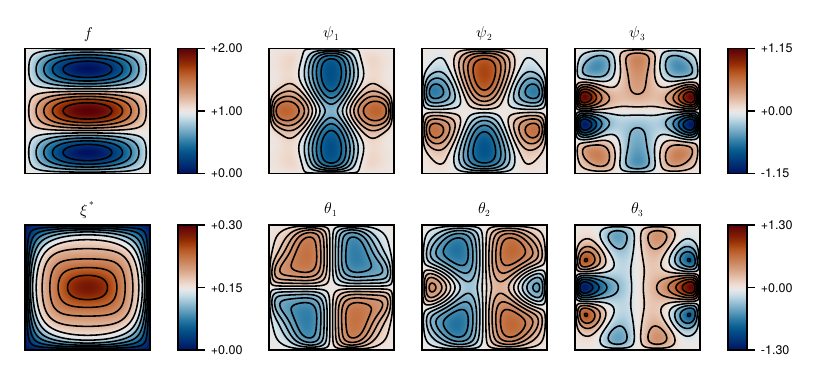}
    \caption{
    \label{fig:square-sin-critical-fields-Pe20}
    Results for optimal cooling in a square with a sinusoidal heat distribution, obtained at $Pe=20$ with discretization parameter $n=30$.
    \emph{Top-left:} The heat distribution $f$.
    \emph{Bottom-left:} The optimal field $\xi^*$ for \cref{e:ldp-square-symm}.
    \emph{Top-right and bottom-right:} Critical streamfunction-temperature pairs $(\psi_i, \theta_i)$, for which the first two constraints $\QF(\psi,\theta)\geq 0$ in \cref{e:ldp-square-symm} are active.
    }
\end{figure}

The reason why a finer discretization is needed as $\Pe$ is increased becomes clear if one inspects the optimal solution of problem \cref{e:ldp-square-symm}. For brevity, we focus only on the sinusoidal heat distribution at $\Pe=10$ and $\Pe=20$; results for the uniform heating case are similar. \Cref{fig:square-sin-critical-fields-Pe10,fig:square-sin-critical-fields-Pe20} show that the optimal solution $\xi^*$ has a relatively simple structure at both $\Pe$ values. In contrast, the pairs $(\psi_i,\theta_i)$ for which the first two constraints in \cref{e:ldp-square-symm} are active become increasingly complex. (Additional pairs for which the other two redundant constraints are active can be recovered using the symmetry transformation described at the end of \cref{ss:example-square-symm} and are not shown for brevity). Accurately resolving these fields is necessary to obtain a well-converged solution. Note also that the number of such `critical fields' increases with $\Pe$, in analogy to what is observed when implementing the background method for estimating time-average properties of turbulent flows between parallel plates (see, e.g., \cite{Doering1996,Plasting2003,fpw2018,Wen2015}). 

%%%%%%%%%%%%%%%%%%%%%%%%%%%%%%%%%%%%%%%%%%%%%%%%%%%%%%%%%%%%%%%%
\subsection{Optimal cooling in a uniformly heated annulus}\label{ss:annulus}
Next, we consider optimal cooling in the annulus $\Omega = \{\vec{x}\in\R^2:\;\rho < \abs{\vec{x}} < 1\}$ with $\rho=\frac14$ and a uniform heat distribution $f(\vec{x})=1$. It is convenient to work in the polar coordinates $(r,\phi)$.

\subsubsection{Symmetry reduction}\label{ss:annulus-symmetrization}
Since the chosen heat distributions depend only on the radial direction, we may restrict the optimization to functions $\xi=\xi(r)$ varying only in the radial direction. Then, a Fourier series expansion in polar coordinates detailed in \cref{app:fourier-annulus} allows us to rewrite problem \cref{e:ldp} as
\begin{equation}\label{e:ldp-annulus-symm}
    \lb(\Pe) = 
    \max_{\substack{a \in \R \\ \xi \in H^1_0(\rho,1) \\ \QF_k(a,\xi)\succeq 0 \,\forall k\in\Z}}\quad
    2\pi\int_\rho^1  \left( f\xi - \frac{1}{4} \abs{\xi'}^2 \right) r\dr - a,
\end{equation}
where $k$ is the wavenumber of each Fourier mode in the expansion and $\QF_k(a,\xi)$ is a quadratic form on $H^1_0(\rho,1) \times H^1_0(\rho,1)$ defined as
\begin{equation}\label{e:qf-fourier-annulus}
    \QF_k(a,\xi)[\psi,\theta] = \int_\rho^1 
        \left[ a \left(r|\psi'|^2 + \frac{k^2}{r}|\psi|^2\right) +
         \left( r|\theta'|^2 + \frac{k^2}{r} |\theta|^2 \right)
         + k\Pe\,\xi'\,\psi \theta  \right] \dr.
\end{equation}

Problem~\cref{e:ldp-annulus-symm} can be simplified further by setting $a=\alpha^2$, $\psi=(u+v)/\sqrt{\alpha}$ and $\theta = (u-v)\sqrt{\alpha}$, which is a version of the change of variables discussed in \cref{rem:symmetrization-1}. Indeed, a direct calculation shows that $\QF_k(a,\xi)[\psi,\theta] = 2\mathcal{S}_k^+(a,\xi)[u] + 2\mathcal{S}_k^-(a,\xi)[v]$ where
\begin{align*}
    S_k^\pm(a,\xi)[u] = \int_\rho^1 
        \alpha \left(r|u'|^2+ \frac{k^2}{r}|u|^2 \right)
        \pm \frac{\Pe}2 \,k\, \xi' u^2  \, \dr.
\end{align*}
Since the functions $u$ and $v$ can be chosen independently, $\QF_k(a,\xi) \succeq 0$ on $H^1_0(\rho,1) \times H^1_0(\rho,1)$ for every $k\in\Z$ if and only if $\mathcal{S}_k^+(a,\xi) \succeq 0$ on $H^1_0(\rho,1)$ for every $k\in\Z$. Consequently, the maximization problem in \cref{e:ldp-annulus-symm} is equivalent to
\begin{align}\label{e:ldp-annulus-symm-v2}
    \max_{\substack{\alpha \in \R \\ \xi \in H^1_0(\rho,1) \\ S_k^+(a,\xi) \succeq 0 \,\forall k \in \Z}}\quad
    &2\pi\int_\rho^1  \left( f\xi - \frac{1}{4} \abs{\xi'}^2 \right) r\dr - \alpha^2.
\end{align}

\subsubsection{Discretization scheme}
To discretize problem \cref{e:ldp-annulus-symm-v2}, we first fix $m\in\N$ and impose the constraint $S_k^+(a,\xi) \succeq 0$ only for nonzero $k \in \{-m,\ldots,m\}$. (The constraint is trivially satisfied if $k=0$.) We then discretize the radial direction using a pseudo-spectral Chebyshev collocation method with $n+2$ grid points, that is, we replace $H^1_0(\rho,1)$ with the space of polynomials of degree $n+1$ defined by their values at $n+2$ Chebyshev points of the second kind. We compute the integrals in \cref{e:ldp-annulus-symm-v2} and in the definition of $S_k^+(a,\xi)$ using a Clenshaw--Curtis quadrature at $3(n+2)$ Chebyshev points of the second kind. This quadrature is exact for the polynomials $f\xi$, $|\xi'|^2$, $r|u'|^2$, and $\xi'u^2$, and is also accurate for the rational function $u^2/r$. 

Upon accounting for the homogeneous Dirichlet boundary conditions satisfied by functions in $H^1_0(\rho,1)$, this discretization strategy results in an SDP with $n+1$ optimization variables and $2m$ LMIs of size $n\times n$ each. We consider $m$ and $n$ sufficiently large if halving them changes the numerical approximation of $\lb(\Pe)$ by less than $0.1\%$. With this criterion, the largest \Peclet\ number we could consider on a workstation with $500$GB RAM before exceeding the available memory was $\Pe = 354.8$, which required $(m,n)=(256,64)$.

To access higher \Peclet\ numbers, we used an alternative approach where we impose the constraint $S_k^+(a,\xi) \succeq 0$ for all $k\in\R$. After discretization in the radial direction, this stronger constraint becomes an $n\times n$ quadratic polynomial matrix inequality with indeterminate $k$, and can be enforced computationally using techniques for sum-of-squares programming (see, e.g., \cite[\S3.3.2]{Parrilo2013}). The resulting optimization problem is an SDP with one positive semidefinite cone of dimension $2n\times 2n$, which we implement using the Julia package \texttt{SumOfSquares.jl} \cite{SumOfSquares1,SumOfSquares2}. Again, we consider $n$ sufficiently large if halving it changes the SDP's optimal value by less than $0.1\%$. This strategy allowed us to access \Peclet\ numbers up to $\Pe=1000$, which required $n=256$ and (at this resolution) approximately 15 hours per SDP.

\subsubsection{Results}
We used the two discretization schemes described above to approximate $\lb(\Pe)$ for \Peclet\ numbers ranging from $\Pe=0.1$ to $\Pe=1000$. The results are plotted in the top-left panel of \cref{f:bounds_vs_Pe_annulus}. Other panels in that figure show the optimal constant $a$ as a function of $\Pe$ and the optimal function $\xi(r)$ for $\Pe=10$, $100$ and $1000$. Note that the results obtained with the discrete wavenumbers $k\in\Z$ are indistinguishable from those obtained with $k\in\R$, so we only discuss the latter.

\begin{figure}[t]
    \centering
    \includegraphics[width=\linewidth]{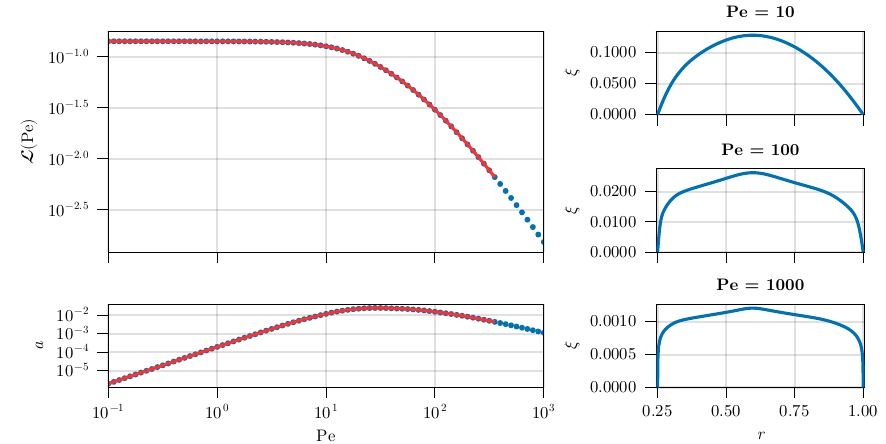}
    \caption{Optimal solutions of SDP discretizations of problem \cref{e:ldp-annulus-symm-v2}, which gives lower bounds on the optimal value $\ocpval(\Pe)$ of problem \cref{e:ocp} for a uniformly heated annulus. Results are computed by imposing the constraint $S_k^+(a,\xi) \succeq 0$ for $k\in\{-m,\ldots,m\}$ (red lines) and for all $k\in\R$ (blue dots). \emph{Top-left:} Lower bounds as a function of $\Pe$. \emph{Bottom-left:} Optimal $a$ as a function of $\Pe$. \emph{Right column:} Optimal $\xi(r)$ for $\Pe=10$, $100$ and $1000$. Note the different ranges of the vertical axis.
    \label{f:bounds_vs_Pe_annulus}
    }
\end{figure}

Our numerical approximations to $\lb(\Pe)$ are consistent with the functional form of the analytical lower bound from \cref{th:analytical-lb}, namely, $\lb(\Pe) \sim 1/[1 + g(\Pe)]$ for some increasing and unbounded function $g(\Pe)$. However, we cannot determine the asymptotic decay rate of $\lb(\Pe)$ with any confidence because the range of \Peclet\ numbers spanned by our computations is too small.  
\begin{figure}
    \centering
    \includegraphics[width=0.8\linewidth]{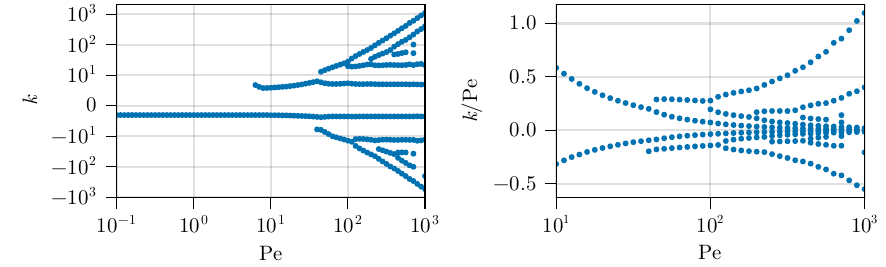}
    \caption{
    \emph{Left:} Active wavenumbers $k$ for problem \cref{e:ldp-annulus-symm-v2} as a function of $\Pe$. \emph{Right:} The same plot rescaled by $\Pe$, illustrating the superlinear growth of the largest active wavenumber. Both plots are obtained from discretizations of \cref{e:ldp-annulus-symm-v2} that consider $k\in\R$; similar plots for $k\in\Z$ are omitted for brevity.}
    \label{f:annulus-critical-k}
\end{figure}

The main barrier to accessing a broader range of \Peclet\ numbers is the high resolution needed to make our SDP discretizations accurate, which is due to two factors. First, although the constraint $S_k^+(a,\xi) \succeq 0$ is active only for a small set of wavenumbers $k$ (cf. \cref{f:annulus-critical-k}), the active set cannot be determined \emph{a priori}, so SDP discretizations must include a large number of LMIs. Second, even if one reduces this number using an `active set' method or an iterative wavenumber selection strategy similar to that in \cite[\S5.2]{fpw2018}, the largest active wavenumbers appear to increase super-linearly with $\Pe$. Approximating the corresponding critical functions $u$ requires a high resolution because they correspond to flows with a high-frequency roll structure localized near the domain boundaries, as shown in \cref{f:annulus-critical-flows} for the moderate $\Pe=100$.

On the other hand, it is clear from our results that the optimal $\xi$ obtained computationally differs significantly from that used in the proof of \cref{th:analytical-lb}, which for the present annular geometry is a $\Pe$-dependent multiple of $\smash{(\frac{1-\rho^2}{\ln \rho}) \ln r + r^2 - 1}$. In the next section, we will demonstrate that analytical choices of $\xi$ inspired by the profiles plotted in \cref{f:bounds_vs_Pe_annulus} can improve the efficiency bounds in \cref{th:analytical-lb} by a logarithmic factor.

\begin{figure}[t]
    \centering
    \includegraphics[scale=1]{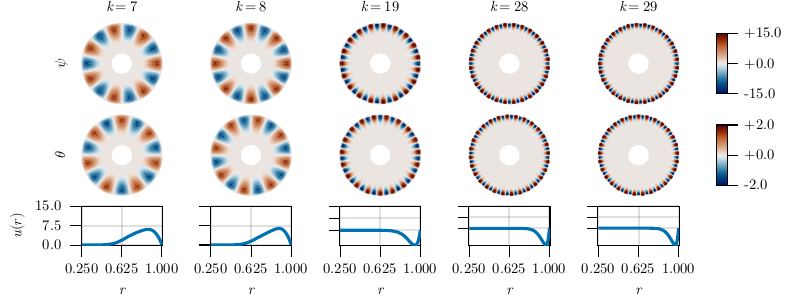}\\[10pt]
    \includegraphics[scale=1]{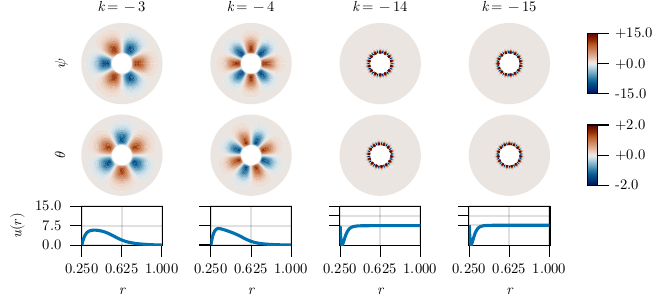}
    \caption{Critical streamfunction-temperature pairs $(\psi,\theta)$ and wavenumbers $k$ for which the constraint of problem \cref{e:ldp-annulus-symm} are active at $\Pe=100$. These pairs are recovered from functions $u(r)$, also shown, for which the constraint of problem \cref{e:ldp-annulus-symm-v2} is active.
    \label{f:annulus-critical-flows}
    }
\end{figure}
\section{Improved analytical bounds for the annulus and the disk}
\label{s:improved-bounds}

Inspired by the numerical results presented in \cref{ss:annulus}, we now take a closer look at optimal cooling problems in circular and annular domains. By a suitable choice of coordinates, it suffices to consider the unit annulus $\Omega=\{\vec{x}\in\R^2: \rho < \abs{\vec{x}} < 1\}$ with inner radius $\rho$ and the unit disk $\Omega=\{\vec{x}\in\R^2: \abs{\vec{x}} < 1\}$. We will show that the optimal cooling efficiency bound $\efficiency(\Pe) \lesssim \Pe^{2}$ from \cref{th:analytical-lb} can be reduced by a logarithmic factor if the heat distribution $f$ has a strictly positive average on any circle centered at the origin. This class includes, for example, the uniform heat distribution $f=1$ studied in \cref{ss:annulus} and all positive and radially symmetric $f$.

Specifically, let $f$ be given in the polar coordinates $(r,\phi)$ for convenience, and let
\begin{equation*}
    \overline{f}(r) := \frac{1}{2\pi}
    \int_0^{2\pi} f(r,\phi) \, {\rm d}
    \phi
\end{equation*}
denote its azimuthal average.
We prove the following results.

\begin{theorem}[Bounds for the annulus]
\label{th:refined-annulus}
    Let $\Omega=\{\vec{x}\in\R^2: \rho < \abs{\vec{x}} < 1\}$ be a unit annulus with inner radius $\rho \in(0,1)$. 
    Assume $\overline{f}(r) \geq f_{\rm{min}} > 0$ for all $r \in (\rho,1)$. There exists a constant $C(\rho,f) \geq 16 \pi^2 \rho^6 (1-\rho)^3 f_{\rm min}^2 / (1-\rho + 8\pi\rho^4 \e)$ such that
    \begin{equation*}
        \ocpval(\Pe) \geq 
        C(\rho,f_{\rm min}) \frac{\ln^2 \Pe}{\Pe^2} \qquad \forall \, \Pe>0.
    \end{equation*}
    Consequently, there exists a constant $C'(\rho,f)$ such that $\efficiency(\Pe) \leq C'(\rho,f) \frac{\Pe^2}{\ln^2 \Pe}$.
\end{theorem}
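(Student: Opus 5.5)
The plan is to produce an explicit admissible point for the balanced dual formulation of \cref{rem:symmetrization-1}, using a radial multiplier $\xi=\xi(r)$ with logarithmic boundary layers, as suggested by the profiles in \cref{f:bounds_vs_Pe_annulus}. For radial $\xi$, the Fourier reduction of \cref{ss:annulus-symmetrization} carries over verbatim for general $f$, since only the objective involves $f$. Indeed, $\avg{f\xi}=2\pi\int_\rho^1 \overline{f}\,\xi\,r\,\dr$, and the constraint becomes $S_k^+(\alpha^2,\xi)\succeq 0$ for all $k\in\Z$:
\begin{equation*}
\int_\rho^1 \alpha\Big(r|u'|^2+\frac{k^2}{r}u^2\Big) + \frac{\Pe}{2}\,k\,\xi' u^2\,\dr\ \ge 0 \qquad \forall\,u\in H^1_0(\rho,1).
\end{equation*}
The case $k=0$ is trivial. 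Replacing $k$ by $-k$ shows it suffices to control $|k|\int_\rho^1 |\xi'|u^2\,\dr$ by the $\alpha$-weighted energy.

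\emph{Construction.} Let $d(r)=\min\{r-\rho,\,1-r\}$ be the distance to the boundary, fix a small $\delta\in(0,d_0)$ with $d_0=(1-\rho)/4$, and set $L=\ln(d_0/\delta)$. Define $\xi = A\,\eta(d)$, where $\eta(d)=d/(\delta L)$ for $d<\delta$, $\eta(d)=\big(1+\ln(d/\delta)\big)/L$ for $\delta\le d\le d_0$, and $\eta$ is constant (approximately $1$) in the bulk $d>d_0$. Then $|\xi'|\le A/(L\,d)$ everywhere.

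\emph{Key constraint estimate.} Using AM--GM,
\begin{equation*}
|k|\,\frac{u^2}{d}\le \frac{k^2u^2}{2\,\epsilon\, r}+\frac{\epsilon\, r\,u^2}{2d^2},
\end{equation*}
and combining this with the one-dimensional Hardy inequality $\int u^2/d^2 \le 4\int |u'|^2$ (applied near each endpoint, with $r\in[\rho,1]$ used to convert between the weights $r$ and $1/r$) gives
\begin{equation*}
|k|\int_\rho^1|\xi'|u^2\,\dr\le \frac{C_1(\rho)A}{L}\int_\rho^1\Big(r|u'|^2+\frac{k^2}{r}u^2\Big)\dr .
\end{equation*}
The point is that $k$ cancels, so there is no restriction on the wavenumber. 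Hence the choice $\alpha = C_1\Pe A/(2L)$ is admissible.

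\emph{Objective and optimization.} Since $\overline f\ge f_{\min}$ and $\eta$ is nonnegative and of order one on a set of measure comparable to $1-\rho$, we get $\avg{f\xi}\ge c_2 A$. The gradient cost is
\begin{equation*}
\frac14\avg{|\nabla\xi|^2}\lesssim \frac{A^2}{L^2}\Big(\frac{1}{\delta}+\int_\delta^{d_0}\frac{\dr}{d}\Big)\lesssim \frac{A^2}{L^2\delta}.
\end{equation*}
The bound therefore reads
\begin{equation*}
\lb(\Pe)\ge c_2A - c_3\frac{A^2}{L^2}\Big(\frac1\delta+\Pe^2\Big).
\end{equation*}
Choosing $\delta=\Pe^{-2}$ makes $L\sim 2\ln\Pe$, and optimizing over $A$ (so that $A\sim \ln^2\Pe/\Pe^2$) yields $\lb(\Pe)\gtrsim \ln^2\Pe/\Pe^2$. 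For small $\Pe$, where $\delta<d_0$ fails or $\ln\Pe$ is not large, I would instead fall back on \cref{th:analytical-lb}, adjusting constants so that a single $C$ works for all $\Pe>0$. The efficiency bound then follows from \cref{e:efficiency}.

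The main obstacle is the constraint estimate: making the $k$-uniform bound rigorous with explicit $\rho$-dependent constants (the $r$ versus $1/r$ weights, and the Hardy inequality on each half-interval), since this is what produces the stated lower bound on $C$ involving $\rho^6$. Tracking these constants, rather than the structure of the argument, is where I expect the work to lie.
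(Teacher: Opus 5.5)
Your construction and estimates follow the paper's proof closely. The shared ingredients are a radial $\xi$ with logarithmic boundary layers of width $\sim\Pe^{-2}$, the reduction to $S_k^+\succeq 0$, and an AM--GM split combined with half-interval Hardy inequalities so that $k$ cancels and $\alpha\propto\Pe$ times the logarithmic slope. The amplitude is then optimized. Your linear core and bulk plateau, in place of the paper's shifted logarithm $\ln((d+\varepsilon)/\varepsilon)$ peaked at $\rho_0$, are cosmetic, and the large-$\Pe$ argument is sound. One small slip: in the gradient estimate the integrand is $d^{-2}$, not $d^{-1}$, but the conclusion $\lesssim A^2/(L^2\delta)$ is unchanged. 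Your balanced AM--GM constant $\rho^{-1/2}$, in place of the paper's $\rho^{-2}$, would actually improve the $\rho$-dependence at large $\Pe$.

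The step that cannot work is the small-$\Pe$ fallback. Since $\vec{u}=0$ is admissible, $\ocpval(\Pe)\le\|f\|_{\Hmo}^2$ for every $\Pe$, while $C\ln^2\Pe/\Pe^2\to\infty$ as $\Pe\to0^+$. So no $C>0$ works on all of $(0,\infty)$, and \cref{th:analytical-lb}, which only gives a bounded lower bound, cannot repair this. The inequality is meaningful only for $\Pe\ge1$. That is also all the paper's proof covers, because its amplitude $\delta\propto\ln\Pe/\Pe^2$ must be positive for $(\alpha,\xi)$ to be admissible. You should therefore restrict to $\Pe\ge1$.

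On $(1,P_0]$ your fallback does give \emph{some} $C(\rho,f)$. It does not give the explicit lower bound in the statement, because it passes through the non-explicit constant $c(\Omega)$ of \cref{e:clms}. Your construction also needs $\Pe^{-2}<d_0$, and it produces $L=2\ln\Pe-\ln\frac{4}{1-\rho}$ rather than $2\ln\Pe$, so the constant degrades near the threshold.

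The paper avoids any threshold by using the shifted logarithm with $\varepsilon=(1-\rho)/(2\e\Pe^2)$. This $\varepsilon$ is chosen so that $\int_\rho^1\xi\,\overline{f}\,r\,\dr\ge f_{\min}\delta\rho(1-\rho)\ln\frac{1-\rho}{2\e\varepsilon}=2f_{\min}\delta\rho(1-\rho)\ln\Pe$ holds exactly for every $\Pe>1$. It also gives $\int_\rho^1|\xi'|^2r\,\dr\le2\delta^2/\varepsilon$, and the paper takes $\alpha=\delta\Pe/(2\rho^2)$. Maximizing the resulting quadratic in $\delta$ yields precisely the stated constant $16\pi^2\rho^6(1-\rho)^3f_{\min}^2/(1-\rho+8\pi\rho^4\e)$.
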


\begin{theorem}[Bounds for the disk]
\label{th:refined-disk}
    Let $\Omega$ be the unit disk. Assume $\overline{f}(r) \geq f_{\rm{min}} > 0$ for all $r \in [0,1)$. There exists a constant $C(f_{\rm min}) \geq \pi^2 f_{\rm min}^2 / (64 + 32 \pi \e)$ such that
    \begin{equation*}
        \ocpval(\Pe) \geq 
        C(f_{\rm min}) \frac{\ln^2 \Pe}{\Pe^2} \qquad \forall \, \Pe>0.
    \end{equation*}
    Consequently, there exists a constant $C'(f)$ such that $\efficiency(\Pe) \leq C'(f) \frac{\Pe^2}{\ln^2 \Pe}$.
\end{theorem}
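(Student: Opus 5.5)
The plan is to apply the balanced reformulation of \cref{rem:symmetrization-1} with a radial trial function $\xi=\xi(r)$ and to verify the constraint mode by mode in Fourier space, as in \cref{ss:annulus-symmetrization}. Since $\xi$ is radial, only the azimuthal average enters the objective: $\avg{f\xi}=2\pi\int_0^1 \overline{f}\,\xi\, r\dr$. Hence if $\xi\ge 0$, then $\avg{f\xi}\ge 2\pi f_{\rm min}\int_0^1 \xi\, r\dr$.

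The Fourier decomposition in \cref{app:fourier-annulus} transfers to the disk with $\rho=0$. The $k=0$ constraint is trivial, and for $k\neq0$ the term $k^2|u|^2/r$ forces $u(0)=0$. Positivity of $\QF'(\alpha,\xi)$ therefore reduces to showing, for all $k\ge1$ and all $u$ with $u(1)=0$,
\begin{equation*}
  \int_0^1 \alpha\Bigl(r|u'|^2+\frac{k^2}{r}u^2\Bigr)\dr \;\ge\; \frac{\Pe}{2}\,k\int_0^1 |\xi'|\,u^2\dr .
\end{equation*}

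The key new ingredient is a Hardy inequality at the cold wall. For $u(1)=0$ it gives $\int_{1/2}^1 |u'|^2\dr \ge \tfrac14\int_{1/2}^1 u^2/(1-r)^2\dr$, up to a harmless cutoff at $r=1/2$. Combined with the $k^2$ term, it shows that the left-hand side dominates $\alpha\int \bigl(c_1(1-r)^{-2}+c_2k^2\bigr)u^2\dr$ on $(1/2,1)$. Minimising $\bigl(c_1(1-r)^{-2}+c_2k^2\bigr)/k$ over $k$ by AM--GM yields a $k$-independent admissible weight $\gtrsim 1/(1-r)$. This means the slope $|\xi'|$ may be as large as $\sim\alpha/(\Pe\,(1-r))$, which is the improvement over the $1/r$-type profile implicit in \cref{th:analytical-lb}.

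The trial function will be built as follows, with $\delta\in(0,1/2)$ to be chosen:
\begin{itemize}
\item $\xi\equiv A$ for $r\le 1/2$;
\item $\xi'=-\lambda\alpha/\bigl(\Pe\,(1-r)\bigr)$ on $(1/2,1-\delta)$;
\item $\xi$ linear on $(1-\delta,1)$ with slope $\lambda\alpha/(\Pe\,\delta)$, which is still admissible since $1/\delta\lesssim 1/(1-r)$ there.
\end{itemize}
Here $\lambda$ is an absolute constant fixed by the Hardy/AM--GM step. This gives $\xi\ge0$ with $A\approx (\lambda\alpha/\Pe)\ln(1/\delta)$.

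For the objective, $\int f\xi \gtrsim f_{\rm min}A$, while $\tfrac14\int|\nabla\xi|^2\lesssim (\alpha/\Pe)^2/\delta$. Choosing $\delta=\Pe^{-2}$ makes the gradient cost $O(\alpha^2)$ and gives $A\sim(\alpha/\Pe)\ln\Pe$. The objective is then bounded below by $c\,f_{\rm min}\,\alpha\ln\Pe/\Pe - C\alpha^2$. Optimising with $\alpha\sim f_{\rm min}\ln\Pe/\Pe$ yields $\lb(\Pe)\gtrsim f_{\rm min}^2\ln^2\Pe/\Pe^2$, and \cref{th:duality} together with \cref{e:efficiency} gives the efficiency bound. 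Tracking the constants should recover the stated $\pi^2 f_{\rm min}^2/(64+32\pi\e)$; the factor $\e$ suggests a specific choice such as $\delta=\e\,\Pe^{-2}$, or a regime split at $\Pe=\e$.

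The main obstacle is the coercivity step. One must combine the Hardy inequality, which needs $u(1)=0$ and a localisation to avoid the origin, with the $k^2u^2/r$ term so as to obtain a bound uniform in $k\ge1$ with explicit constants, including across the matching points $r=1/2$ and $r=1-\delta$. A secondary issue is the range of small $\Pe$. When $\delta=\Pe^{-2}$ would exceed $1/2$, I would instead fall back on \cref{th:analytical-lb}, or take $\delta$ fixed, and absorb this range into the constant; the literal claim for $\Pe\to0$ would need $\ln$ read as, e.g., $\ln(\e+\Pe)$ or restricted to $\Pe\ge \e$.
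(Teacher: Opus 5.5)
Your proposal is correct and takes essentially the same route as the paper: a radial $\xi$ that is constant on $B_{1/2}$ and has slope of size $\alpha/(\Pe(1-r))$ near the cold wall, cut off at scale $\sim\Pe^{-2}$ (the paper uses the shifted logarithm $\delta\ln((1+\varepsilon-r)/\varepsilon)$ rather than your linear cap), with admissibility obtained from Hardy's inequality at $r=1$ (which needs only $u(1)=0$) combined with the $k^2u^2/r$ term via $ab\le\frac14 a^2+b^2$, exactly as in \cref{e:annulusc-cnstr-2}. Your caveat about small $\Pe$ is also justified: since $\ocpval(\Pe)\le\norm{f}_{\Hmo}^2$ while $\ln^2\Pe/\Pe^2\to\infty$ as $\Pe\to0$, the literal claim cannot hold for all $\Pe>0$, and the paper's own choice $\delta\propto\ln\Pe/\Pe^2$ makes $\alpha<0$ (hence infeasible) for $\Pe<1$, so both arguments actually establish the bound only for $\Pe\ge1$.
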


\Cref{th:refined-annulus} is proved in \cref{ss:proof-annulus}. The proof of \cref{th:refined-disk} is very similar and relies on the same key estimates, so we only outline it in \cref{ss:proof-disk}.

\subsection{Proof of \texorpdfstring{\cref{th:refined-annulus}}{Theorem~\ref{th:refined-annulus}}}\label{ss:proof-annulus}

We only need to prove the lower bound on $\ocpval(\Pe)$, since the corresponding upper bound on $\efficiency(\Pe)$ follows immediately from \cref{e:efficiency}.
Restricting the maximization in the dual problem \cref{e:ldp} to functions $\xi$ that depend only on the radial position $r=|\vec{x}|$ gives the lower bound
\begin{equation}\label{e:annulus-general-f}
    \ocpval(\Pe) \geq 
    \lb(\Pe) \geq \max_{\substack{a \in \R\\ \xi \in H^1_0(\rho,1) \\ \QF(a,\xi)\succeq 0}} \;2\pi\int_{\rho}^1 \left( \xi \overline{f} - \frac14 |\xi'|^2 \right) r \dr - a. 
\end{equation}
We can then apply the same symmetry reduction arguments outlined in \cref{ss:annulus-symmetrization} to conclude that the maximization problem on the right-hand side of this inequality can be rewritten as \cref{e:ldp-annulus-symm-v2} with $\overline{f}$ in place of $f$. We will construct a feasible $\xi$ and $\alpha = \sqrt{a}$ for that problem, which achieve an objective value no smaller than a multiple of $\smash{\frac{\ln^2 \Pe}{\Pe^2}}$.

\subsubsection{The candidate feasible point}
Let $\rho_0= \frac12(1+\rho)$ denote the average radius of the annulus. We claim that suitable choices of $\alpha$ and $\xi$ are
\begin{equation}\label{e:annulus-proof-xi-alpha}
    \alpha
    =
    \frac{\delta}{2\rho^2} \, \Pe
    \qquad\text{and}\qquad
    \xi(r) = \begin{cases}
        \delta \ln\left( \frac{r-\rho + \varepsilon}{\varepsilon}\right) & \text{if } r \in (\rho, \rho_0], \\
        \delta \ln\left( \frac{1 + \varepsilon - r}{\varepsilon}\right) & \text{if }r \in (\rho_0, 1),
    \end{cases}
\end{equation}
where $\delta$ and $\varepsilon$ are positive $\Pe$-dependent constants given by
\begin{equation*}
    \varepsilon = \frac{1-\rho}{2 \e}\,\frac{1}{\Pe^2}
    \qquad\text{and}\qquad
    \delta
    =
    \frac{8 \pi f_{\rm min} \rho^5 (1-\rho)^2}{1-\rho + 8\pi \e \rho^4} \,
    \frac{\ln\Pe}{\Pe^2}.
\end{equation*}
\Cref{f:analytical-xi} shows the function $\xi$ for $\Pe=20$, $\rho=\frac14$ and $f_{\rm min} = 1$. Note the qualitative similarity with the optimal $\xi$ obtained numerically at large $\Pe$ for $f=1$, shown in \cref{f:bounds_vs_Pe_annulus}.

\begin{figure}
    \centering
    \includegraphics[scale=1]{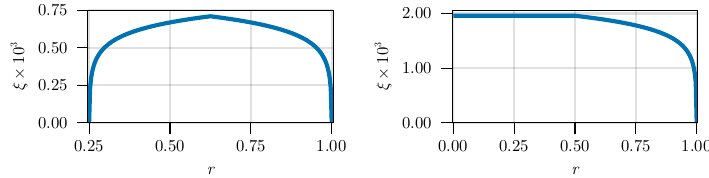}
    \caption{Analytical $\xi$ used in the proof of \cref{th:refined-annulus} (left) and in the proof of \cref{th:refined-disk} (right). The plots are for $\Pe=20$, $\rho=\frac14$ and $f_{\rm min} = 1$.}
    \label{f:analytical-xi}
\end{figure}

\subsubsection{Feasibility} 
We now verify that the function $\xi$ and the constant $\alpha$ in \cref{e:annulus-proof-xi-alpha} satisfy the constraints of problem \cref{e:ldp-annulus-symm-v2}, so they are admissible for the maximization problem in~\cref{e:annulus-general-f}. First, we establish a version of Hardy's inequality.

\begin{lemma}\label{lemma:hardy}
    Every $u\in H_{0}^{1}(\rho,1)$ satisfies
    \begin{align*}
        \int_{\rho}^{\rho_0}\frac{1}{(r-\rho)^{2}}\,\frac{u^{2}}{r}\,\dr	
        &\leq\frac{4}{\rho^{2}}\int_{\rho}^{\rho_0}r\abs{u'}^{2}\,\dr,
        \\
        \int_{\rho_0}^{1}\frac{1}{(1-r)^{2}}\,\frac{u^{2}}{r}\,\dr	
        &\leq\frac{4}{\rho^{2}}\int_{\rho_0}^{1}r\abs{u'}^{2}\,\dr.
    \end{align*}
\end{lemma}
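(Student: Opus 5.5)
The plan is to reduce both inequalities to the classical one-dimensional Hardy inequality
\begin{equation*}
    \int_0^L \frac{v(s)^2}{s^2}\,\mathrm{d}s \leq 4\int_0^L \abs{v'(s)}^2\,\mathrm{d}s,
\end{equation*}
valid for every $v\in H^1(0,L)$ with $v(0)=0$. The weights $r$ and $1/r$ are then absorbed using the fact that $r\in(\rho,1)$ is bounded above and below. We treat the first inequality in detail; the second follows by the reflection $s=1-r$.

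\textbf{First inequality.} Set $s=r-\rho\in(0,\rho_0-\rho)$ and $v(s)=u(\rho+s)$. Since $u\in H^1_0(\rho,1)$, the function $u$ is absolutely continuous with $u(\rho)=0$, so $v(0)=0$. Note that no condition at $s=\rho_0-\rho$ is required. On $(\rho,\rho_0)$ we have $1/r\leq 1/\rho$ and $r\geq\rho$. Hence
\begin{equation*}
    \int_\rho^{\rho_0}\frac{u^2}{(r-\rho)^2 r}\,\dr
    \leq \frac1\rho\int_0^{\rho_0-\rho}\frac{v^2}{s^2}\,\mathrm{d}s
    \leq \frac4\rho\int_\rho^{\rho_0}\abs{u'}^2\,\dr
    \leq \frac{4}{\rho^2}\int_\rho^{\rho_0} r\abs{u'}^2\,\dr.
\end{equation*}
This is the claim. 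The constant $4/\rho^2$ is generous, which is why these crude weight bounds are enough.

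\textbf{Second inequality.} On $(\rho_0,1)$ set $s=1-r$ and use $u(1)=0$. The weight bounds $1/r\leq 1/\rho_0\leq 1/\rho$ and $r\geq\rho_0\geq\rho$ still hold, so the same chain of estimates gives the second inequality.

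\textbf{Hardy inequality itself.} This is the only step needing care. First take $v$ smooth with $v(0)=0$, and integrate by parts:
\begin{equation*}
    \int_0^L \frac{v^2}{s^2}\,\mathrm{d}s
    = \Big[-\frac{v^2}{s}\Big]_0^L + 2\int_0^L \frac{v v'}{s}\,\mathrm{d}s.
\end{equation*}
The boundary term at $s=L$ equals $-v(L)^2/L\leq 0$ and can be dropped. The term at $s=0$ vanishes, because $\abs{v(s)}\leq \sqrt{s}\,\norm{v'}_{L^2}$ and in fact $v(s)^2/s\to 0$. Applying Cauchy--Schwarz to the remaining integral gives $I\leq 2\sqrt{I}\,\norm{v'}_{L^2}$, where $I$ denotes the left-hand side, and hence $I\leq 4\norm{v'}_{L^2}^2$.

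To make this rigorous when $I$ might be infinite, we would either run the argument on $(\eta,L)$ and let $\eta\to0$, or approximate $v$ in $H^1$ by smooth functions vanishing near $0$ and pass to the limit with Fatou's lemma. This limiting argument is routine, and it is the only technical obstacle.
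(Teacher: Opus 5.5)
Your proposal is correct and follows the paper's proof. Both bound $1/r\le 1/\rho$, apply the one-sided Hardy inequality using $u(\rho)=0$ (or $u(1)=0$ after the reflection $s=1-r$), and then use $1\le r/\rho$; the only difference is that you prove Hardy's inequality yourself, and your integration-by-parts argument on $(\eta,L)$ with $\eta\to0$ is sound, whereas the paper simply cites it.
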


\begin{proof}
    Since $u(\rho)=0$, we can use Hardy's inequality \cite[Exercise~8.8]{Brezis2011} to estimate 
    \begin{eqnarray*}
    \int_{\rho}^{\rho_0}\frac{1}{(r-\rho)^{2}}\frac{u^{2}}{r}\,\dr
    &\leq &
    \frac{1}{\rho} \int_{\rho}^{\rho_0}\frac{u^{2}}{(r-\rho)^{2}}\,\dr
    \\
    &\overset{\substack{\text{Hardy's}\\\text{ineq.}\\[0.5ex]}}{\leq}& 
    \frac{4}{\rho} \int_{\rho}^{\rho_0}\abs{u'}^{2}\,\dr
    \\
    &\leq &
    \frac{4}{\rho^{2}} \int_{\rho}^{\rho_0}r\abs{u'}^{2}\,\dr.
    \end{eqnarray*}
    This proves the first inequality. The second one follows similarly.
\end{proof}

Next, we use these Hardy-type estimates to establish the claimed feasibility.

\begin{proposition}\label{prop:annulus-feasibility}
    Let $\xi$ and $\alpha$ be given as in \cref{e:annulus-proof-xi-alpha}. For all $u\in H^1_0(\rho,1)$ and all $k \in \Z$,
    \begin{equation*}
        \int_\rho^1 
        \alpha \left(r\abs{u'}^2+ \frac{k^2}{r}u^2 \right)
        + \frac{\Pe}2 \,k\, \xi' u^2 \, \dr \geq 0.
    \end{equation*}
\end{proposition}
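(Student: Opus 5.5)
The plan is to prove the inequality pointwise in $k$ by absorbing the indefinite term $\frac{\Pe}{2}k\xi'u^2$ into the two positive terms. The $k^2$-term will handle the dependence on $k$, and the $u'$-term, via \cref{lemma:hardy}, will handle the singular-like size of $\xi'$ near the boundaries. The case $k=0$ is trivial, so fix $k\neq 0$.

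\emph{Step 1: bound $\xi'$.} From \cref{e:annulus-proof-xi-alpha}, $\xi'(r)=\delta/(r-\rho+\varepsilon)$ on $(\rho,\rho_0)$ and $\xi'(r)=-\delta/(1+\varepsilon-r)$ on $(\rho_0,1)$. Since $\varepsilon>0$, this gives
\begin{equation*}
|\xi'(r)|\le \frac{\delta}{r-\rho} \quad\text{on } (\rho,\rho_0), \qquad |\xi'(r)|\le \frac{\delta}{1-r} \quad\text{on } (\rho_0,1).
\end{equation*}
The sign of $k\xi'$ is irrelevant, because I will simply bound the cross term from below by $-\frac{\Pe}{2}|k|\,|\xi'|u^2$.

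\emph{Step 2: Young's inequality.} Pointwise,
\begin{equation*}
\frac{\Pe}{2}|k|\,|\xi'|\,u^2 \le \alpha\frac{k^2}{r}u^2 + \frac{\Pe^2}{16\alpha}\, r\,|\xi'|^2u^2 .
\end{equation*}
The first term on the right cancels exactly against the $k^2$-term in the integrand. So it remains to show
\begin{equation*}
\frac{\Pe^2}{16\alpha}\int_\rho^1 r|\xi'|^2u^2\,\dr \le \alpha\int_\rho^1 r|u'|^2\,\dr .
\end{equation*}

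\emph{Step 3: Hardy.} Since $r<1$, we have $r\le 1/r$. Then Step 1 and \cref{lemma:hardy}, applied separately on $(\rho,\rho_0)$ and $(\rho_0,1)$ and summed, give
\begin{equation*}
\int_\rho^1 r|\xi'|^2u^2\,\dr \le \delta^2\left(\int_\rho^{\rho_0}\frac{u^2}{r(r-\rho)^2}\,\dr+\int_{\rho_0}^1\frac{u^2}{r(1-r)^2}\,\dr\right) \le \frac{4\delta^2}{\rho^2}\int_\rho^1 r|u'|^2\,\dr .
\end{equation*}
It therefore suffices that $\frac{\Pe^2\delta^2}{4\alpha\rho^2}\le\alpha$, that is, $\alpha\ge \frac{\Pe\,\delta}{2\rho}$. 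The choice $\alpha=\frac{\delta\Pe}{2\rho^2}$ satisfies this because $\rho<1$, which concludes the argument.

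\emph{Main obstacle.} The only delicate point is making the Hardy step fit: the weight $r|\xi'|^2$ produced by Young's inequality must be dominated by the weights $\frac{1}{r(r-\rho)^2}$ and $\frac{1}{r(1-r)^2}$ controlled in \cref{lemma:hardy}, with a constant compatible with the prescribed $\alpha$. If the constants did not match, I would rebalance the Young split with a free parameter. Here, however, the crude bound $r\le 1/r$ and $\varepsilon>0$ already leave slack of a factor $1/\rho$. Note that the argument uses nothing about $\delta$ or $\varepsilon$ beyond positivity; their specific values matter only for the objective estimate.
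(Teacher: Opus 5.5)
Your proof is correct and follows essentially the paper's route. Both arguments bound $|\xi'|$ by $\delta$ over the distance to the nearer boundary, then use Young's inequality to split the cross term into a $k^2$ piece, absorbed by $\alpha k^2 u^2/r$, and an inverse-square-distance piece, controlled by \cref{lemma:hardy}. The difference is only bookkeeping: you tune the Young split with $\alpha$ so the $k^2$ term is absorbed exactly (leaving a slack factor $1/\rho$), whereas the paper uses the fixed split $\frac{|k|}{1-r}\le\frac{1}{4(1-r)^2}+k^2$ and the slack $1<1/\rho^2$. You also write $|k|$ explicitly, where the paper's $k$ tacitly requires it.
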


\begin{proof}
    Since $\alpha= \Pe\,\delta / (2\rho^2)$ by definition, and since
    \begin{align*}
        \left|\int_{\rho}^{1}\frac{k}{2}\Pe\, \xi'u^{2}\,\dr\right| 
        \leq
        \frac{\Pe\, \delta}{2} \int_{\rho}^{\rho_0}\frac{ku^{2}}{r-\rho+\varepsilon}\,\dr
        +\frac{\Pe \,\delta}{2}
        \int_{\rho_0}^{1}\frac{ku^{2}}{1+\varepsilon-r}\,\dr,
    \end{align*}
    the claim follows if we can show that
    \begin{subequations}
        \begin{align}
        \label{e:annulusc-cnstr-1}
        \int_{\rho}^{\rho_0}\frac{ku^{2}}{r-\rho+\varepsilon}\,\dr 
        &\leq 
        \frac{1}{\rho^{2}} \int_{\rho}^{\rho_0}\left(r\abs{u'}^{2}+k^{2}\right)\frac{u^{2}}{r}\,\dr,
        \\
        \label{e:annulusc-cnstr-2}
        \int_{\rho_0}^{1}
        \frac{ku^{2}}{1+\varepsilon-r}\,\dr 
        &\leq 
        \frac{1}{\rho^{2}} \int_{\rho_0}^{1}
        \left(r\abs{u'}^{2}+k^{2}\right)\frac{u^{2}}{r}\,\dr.
    \end{align}
    \end{subequations}
    To derive \cref{e:annulusc-cnstr-2}, we use the inequalities $\smash{\frac{1}{1+\varepsilon - r}}< \smash{\frac{1}{1-r}}$ and $1 < \smash{\frac1r}$ on $[\rho_0,1)$, the inequality $ab \leq \smash{\frac14}a^2+b^2$, the second inequality in \cref{lemma:hardy}, and the inequality $1<\smash{\frac{1}{\rho^2}}$ to estimate
    \begin{align*}
        \int_{\rho_0}^1 
        \frac{ku^{2}}{1+\varepsilon-r}\,\dr 
        &\leq 
        \int_{\rho_0}^1\frac{k}{1-r}\,u^{2}\,\dr
        \\
        &\leq
        \int_{\rho_0}^1 
        \left(\frac{1}{4(1-r)^{2}}+k^{2}\right)u^{2}\,\dr
        \\
        &\leq
       \int_{\rho_0}^1 
       \left(\frac{1}{4(1-r)^{2}}+k^{2}\right)\frac{u^{2}}{r}\,\dr
        \\
        &\leq
        \int_{\rho_0}^1
        \left(\frac{r}{\rho^{2}}\abs{u'}^{2}+\frac{k^{2}}{r}u^{2}\right)\,\dr
        \\
        &\leq
        \frac{1}{\rho^{2}}
        \int_{\rho_0}^1
        \left(r\abs{u'}^{2}+\frac{k^{2}}{r}u^{2}\right)\,\dr.
    \end{align*}
    Inequality \cref{e:annulusc-cnstr-1} follows analogously using the first inequality in \cref{lemma:hardy}.
\end{proof}

\subsubsection{Estimating the objective value}
\label{ss:annulus-proof-objective}

Having verified that the function $\xi$ and the constant $\alpha$ in \cref{e:annulus-proof-xi-alpha} are admissible for the maximization problem in \cref{e:annulus-general-f}, we now show that
\begin{equation}\label{e:annulus-proof-cost-bound}
    2\pi\int_{\rho}^1 \left( \xi \overline{f} - \frac14 |\xi'|^2 \right) r \dr - \alpha^2 \geq 
    C(\rho, f_{\rm min})
    \frac{\ln^{2}Pe}{Pe^{2}}
\end{equation}
for a suitable positive constant $C(\rho, f_{\rm min})$. This will conclude the proof of \cref{th:refined-annulus}.

Since $\xi$ is nonnegative and symmetric with respect to $r=\rho_0$ (see the right panel in \cref{f:analytical-xi}), we can estimate
\begin{align*}
    \int_{\rho}^1 \xi \overline{f}\, r\dr
    & \geq 
    2  f_{\rm min} \, \rho
    \int_{\rho_0}^1 \xi \dr
    \\
    &=
    2f_{\rm min} \, \delta \rho \left[ (1+\varepsilon-\rho_0) \ln\left( \frac{1+\varepsilon-\rho_0}{\varepsilon} \right) - (1-\rho_0)\right]
    \\
    &\geq 
    2f_{\rm min} \, \delta \rho (1-\rho_0) \left[ \ln\left( \frac{1-\rho_0}{\varepsilon} \right)  - 1\right]
    \\
    &=
    f_{\rm min} \, \delta \rho (1 - \rho) 
     \ln\left(\frac{1-\rho}{2 \e \varepsilon}\right).
\end{align*}
We also have
\begin{align*}
    \int_{\rho}^{1}|\xi'|^{2}r\,\dr 
    &\leq
    \int_{\rho}^{\rho_0}|\xi'|^{2}\,\dr+\int_{\rho_0}^{1}|\xi'|^{2}\,\dr
    \\
    &=
    \int_{\rho}^{\rho_0}\frac{\delta^{2}}{(r-\rho+\varepsilon)^{2}}\,\dr
    +
    \int_{\rho_0}^{1}\frac{\delta^{2}}{(1+\varepsilon-r)^{2}}\,\dr
    \\
    &=
    \delta^{2}\left(\frac{1}{\varepsilon}-\frac{1}{\rho_0-\rho+\varepsilon}+\frac{1}{\varepsilon}-\frac{1}{1+\varepsilon-\rho_0}\right)
    \\
    &
    \leq
    \frac{2\delta^{2}}{\varepsilon}.
\end{align*}
Using these inequalities, as well as the definitions of $\alpha$, $\delta$, and $\varepsilon$, we arrive (after some lengthy but straightforward algebra omitted for brevity) at
\begin{align*}
    2\pi\int_{\rho}^1 \left( \xi \overline{f} - \frac14 |\xi'|^2 \right) r \dr - \alpha^2 
    &\geq 
    2\pi f_{\rm min} \, \delta \rho (1 - \rho) 
    \ln\left(\frac{1-\rho}{2 \e \varepsilon}\right)
    -
    \frac{\pi \delta^{2}}{\varepsilon}
    - \alpha^2
    \\
    &= \frac{16 \pi^2 \rho^6 (1-\rho)^3  f_{\rm min}^2}{(1-\rho) + 8\pi\rho^4 \e}
    \,
    \frac{\ln^2\Pe}{\Pe^2}.
\end{align*}
This is precisely \cref{e:annulus-proof-cost-bound} with an explicit lower estimate for the constant $C(\rho,f_{\rm min})$.

\subsection{Proof of \texorpdfstring{\cref{th:refined-disk}}{Theorem~\ref{th:refined-disk}}}\label{ss:proof-disk}

We only need to prove the lower bound on $\ocpval(\Pe)$ since \cref{e:efficiency} implies the corresponding upper bound on $\efficiency(\Pe)$. Fix an arbitrary $\rho_0 \in (0,1)$. Set
\begin{equation*}
    \varepsilon = \frac{1-\rho_0}{2 \e} \, \frac{1}{\Pe^2}
    \qquad\text{and}\qquad
    \delta
    =
    \frac{4 \pi f_{\rm min} \rho_0^5 (1-\rho_0)^2}{1-\rho_0 + 4\pi \e \rho_0^4} \,
    \frac{\ln\Pe}{\Pe^2}.
\end{equation*}
We claim that
\begin{equation*}
    \alpha = \frac{\delta}{2\rho_0^2} \, \Pe
    \qquad\text{and}\qquad
    \xi(\vec{x}) = \begin{cases}
        \delta \ln\left( \frac{1 + \varepsilon - \rho_0}{\varepsilon}\right) & \text{if } \abs{\vec{x}} \in (0,\rho_0] \\
        \delta \ln\left( \frac{1 + \varepsilon - \abs{\vec{x}}}{\varepsilon}\right) & \text{if }\abs{\vec{x}} \in (\rho_0, 1)
    \end{cases}
\end{equation*}
are admissible for the `balanced' version of problem \cref{e:ldp} described in \cref{rem:symmetrization-1} and attain an objective value no smaller than a multiple of $\smash{\frac{\ln^2 \Pe}{\Pe^2}}$. (For illustration, the function $\xi$ is plotted in the right panel of \cref{f:analytical-xi} for $\Pe=20$, $f_{\rm min}=1$, and $\rho_0 = 0$.)

To verify this claim, let $B_{\rho_0}$ denote the disk of radius $\rho_0$ centered at the origin. Suppose for the time being that $\xi$ and $\alpha$ are indeed admissible. Since $\xi$ depends only on the radial coordinate $r=\abs{\vec{x}}$ and is a positive constant in $B_{\rho_0}$,  the assumption on $f$ in \cref{th:refined-disk} implies that $\int_{B_{\rho_0}} f \xi \dx > 0$. Then, the objective value attained by $\xi$ and $\alpha$ satisfies
\begin{align*}
    \avg{f \xi - \frac14\abs{\nabla \xi}^2} - \alpha^2
    &\geq 
    \int_{\Omega \setminus B_{\rho_0}} {f \xi - \frac14\abs{\nabla \xi}^2} \,\dx - \alpha^2
    \\
    &=2\pi \int_{\rho_0}^1 {\overline{f} \xi - \frac14\abs{\xi'}^2} \,r\dr - \alpha^2.
\end{align*}
The last expression can be bounded below with the same estimates used for the annular domain in \cref{ss:annulus-proof-objective}. We omit the details for brevity, but one eventually finds that
\begin{equation*}
    \avg{f \xi - \frac14\abs{\nabla \xi}^2} - \alpha^2
    \geq \frac{4\pi^2 \rho_0^6 (1-\rho_0)^3  f_{\min}^2}{1-\rho_0 + 4\pi \rho_0^4 \e} \, \frac{\ln^2 \Pe}{\Pe^2}.
\end{equation*}
The expression on the right-hand side is maximized when $\rho_0 \approx 0.4973$, so we set $\rho_0=\frac12$ to obtain the lower bound stated in \cref{th:refined-disk} with $C(f_{\rm min}) \geq \pi^2 f_{\rm min}^2 / (64 + 32 \pi \e)$.

It remains to verify that our choices of $\xi$ and $\alpha$ are admissible, meaning that the symmetrized quadratic form $\QF'(\alpha,\xi)[\psi,\theta]$ in \cref{e:qf-symmetrized} is nonnegative for all $\theta\in\Hoz$ and all $\psi \in \cal{H}$. Here, $\cal{H}=\Hoz$ because the disk $\Omega$ is simply connected. After integrating the $\xi$-dependent term in $\QF'(\alpha,\xi)[\psi,\theta]$ by parts and observing that $\xi$ is constant on $B_{\rho_0}$, we find that
\begin{align*}
    \QF'(\alpha,\xi)[\psi,\theta]
    &= 
    \avg{ \alpha |\nabla \psi|^2 + \alpha \abs{\nabla\theta}^2 + \Pe\,\theta \, \nabla^\perp \psi \cdot \nabla \xi}
    \\
    &\geq 
    \int_{\Omega \setminus B_{\rho_0}} { \alpha|\nabla \psi|^2 + \alpha \abs{\nabla\theta}^2 + \Pe\,\theta \, \nabla^\perp \psi \cdot \nabla \xi} \,\dx.
\end{align*}
Since the last integral is over the annulus $\Omega \setminus B_{\rho_0}$, we can analyze it with the same steps used for the annulus in \cref{ss:proof-annulus}, the only difference being a lack of boundary conditions on $\psi$ and $\theta$ when $\abs{\vec{x}}=\rho_0$. Specifically, we can perform the same Fourier series expansion described in \cref{app:fourier-annulus}, apply the symmetrization steps outlined in \cref{ss:annulus-symmetrization}, and conclude that $\QF'(\alpha,\xi)$ is positive semidefinite if
\begin{equation*}
    \int_{\rho_0}^1 
        \alpha \left(r|u'|^2+ \frac{k^2}{r}u^2 \right)
        + \frac{\Pe}2 \,k\, \xi' u^2 \, \dr \geq 0
\end{equation*}
for all $k \in \Z$ and all $u\in H^1(\rho_0,1)$ satisfying $u(1)=0$. One can verify that this inequality holds for our choice of $\xi$ and $\alpha$ with the same arguments used to prove \cref{prop:annulus-feasibility}, in particular using inequality \cref{e:annulusc-cnstr-2}. This concludes the proof of \cref{th:refined-disk}.
\section{Conclusion}\label{s:conclusion}

In this work, we used Lagrange duality and semidefinite programming to bound the maximum cooling efficiency of steady, energy-constrained incompressible flows in two dimensions. We measured efficiency via the inverse of the mean square gradient of the fluid's temperature $T$, so  optimal flows solve an optimal control problem minimizing $\smash{\int_\Omega \abs{\nabla T}^2 \dx}$. We showed that the Lagrange dual of this optimal control problem is a well-posed `infinite-dimensional SDP' that can be approximated with guaranteed convergence by finite-dimensional SDP discretizations. We demonstrated this on optimal cooling problems in a square and an annulus, showing also how problem symmetries can be exploited to pose SDPs with reduced complexity. Finally, we proved new analytical bounds on the optimal cooling efficiency $\efficiency(\Pe)$, where the square of the \Peclet\ number $\Pe$ measures the flow's kinetic energy. 
Specifically, we first extended to general domains the bound $\efficiency(\Pe)\lesssim\Pe^{2}$ proved for disks in \cite{Tobasco2022}. Then, inspired by computational results obtained with our semidefinite programming framework, we established the improved upper bound $\efficiency(\Pe) \lesssim \Pe^{2} / \ln^2\Pe$ for flows in disks and annuli when the heat distribution has positive azimuthal average.

Although we focused only on steady, energy-constrained flows in heated domains with a cold boundary, our techniques can be applied more generally. Examples include optimal cooling problems with a mix of isothermal and insulating boundaries, with no-slip boundary conditions, and with bounds on the $H^s$ norm of the fluid's velocity for $s\geq 1$. (The case $s=1$ corresponds to the enstrophy-constrained flows studied in \cite{Tobasco2022, Song2023, Iyer2022, DoeringTobasco2019}.) Problems in which the cooling efficiency $\int_\Omega |\nabla T|^2\,\dx$ is replaced by a generic bounded quadratic functional $\mathcal{E}(T)$ on $\Hoz$ can also be handled with little additional effort. Indeed, since solutions to the advection-diffusion equation \cref{e:ade} satisfy the `energy balance' $\int_\Omega |\nabla T|^2 + fT \,\dx = 0$, it suffices to bound the `translated' cooling efficiency $\mathcal{E}(T) + b\int_\Omega |\nabla T|^2 + fT \,\dx$ for $b>0$ large enough to make this functional bounded below (see also \cite[\S2.1]{Tobasco2022} for a similar approach). One can even view $b$ as a Lagrange multiplier for the energy balance and optimize it computationally within our semidefinite programming framework. Finally, our methods and the extensions we just described can be generalized to time-dependent optimal cooling problems where the cooling efficiency is defined in a space-time averaged sense. The main differences are additional technicalities in the definition of weak solutions to the advection-diffusion equation, but these can be handled as explained in \cite{Tobasco2022}.

Our work leaves open a number of questions. First, although we have proved that SDP discretizations of the dual problem \cref{e:ldp} converge, we have no convergence rates. Our computations suggest that classical approximation rates for Galerkin methods should apply. Proving this, however, requires a higher than $H^1$ regularity for the optimal dual variable $\xi$ and for the `critical fields' associated with the constraint of \cref{e:ldp}. One way to establish this higher regularity would be to study the optimality conditions for \cref{e:ldp}. 

A second open challenge is to remove the computational bottlenecks in the solution of high-resolution SDP discretizations of \cref{e:ldp}.
One option is to exploit structural properties of \cref{e:ldp} and its discretizations beyond symmetries. For example, the small number of `critical fields' observed in our computations suggests that the dual of the SDP \cref{e:sdp} generically has a low-rank solution. SDP solvers exploiting this low-rank structure (see, e.g., \cite{Burer2003,Burer2006,Bellavia2021,Habibi2023,Souto2022,Monteiro2026}) may thus be able to accurately approximate $\lb(\Pe)$ over a large enough range of \Peclet\ numbers to estimate the large-$\Pe$ asymptotics. Another option is to replace general-purpose algorithms for finite-dimensional SDP with function-space algorithms tailored to the infinite-dimensional problem \cref{e:ldp}. Unfortunately, we are not aware of any such algorithms beyond the gradient methods developed in \cite{Wen2013,Wen2015,Ding2019,Wen2022} for problems arising when bounding the time-averaged dissipation of turbulent flows, which can be adapted to \cref{e:ldp} but do not have general convergence guarantees. 

Finally, since the optimal cooling problem \cref{e:ocp} is not convex, the bound $\lb(\Pe)$ obtained with its Lagrange dual \cref{e:ldp} should not be sharp in general. While non-sharp bounds may still exhibit the same large-$\Pe$ asymptotics as the true optimal cost $\ocpval(\Pe)$, we do not know any nontrivial case in which this happens. (Of course, $\ocpval(\Pe)=\lb(\Pe)=0$ for all $\Pe$ if the domain is not heated.) If duality gaps affect the large-$\Pe$ asymptotics, or if quantitatively sharp bounds are required, then one must go beyond Lagrange duality. For discretizations of \cref{e:ocp}, which are quadratic programs, the moment-sum-of-squares hierarchy \cite{Laurent2009,Lasserre2010,Lasserre2024} provides a sequence of convex dual problems that generalize the Lagrange dual and provably converge if certain `algebraic compactness' conditions hold. In principle, it should be possible to formulate an infinite-dimensional version of this hierarchy by adapting ideas proposed in \cite{Goulart2012,Chernyshenko2014a,Goluskin2019,HenrionRudi2025,henrion2023infmom}. While current computational bottlenecks are likely to make a naive application of this approach prohibitively expensive, the particular structure of problem \cref{e:ocp} may allow for some progress. How much can be achieved remains to be seen, but it is an avenue worth exploring.

% Appendix
\appendix
\crefalias{section}{appendix}
\section{Fourier expansion of \texorpdfstring{\cref{e:ldp}}{(\ref{e:lpd})} in an annulus} \label{app:fourier-annulus}
Consider problem \cref{e:ldp} in the annulus $\Omega = \smash{\{\vec{x}:\;\rho < \abs{\vec{x}}^2 < 1\}}$. We work in polar coordinates $(r,\phi)$ and assume that $f=f(r)$ depends only on the radial direction. We can then take $\xi=\xi(r)$, too, so the objective function in \cref{e:ldp} reduces to that in~\cref{e:ldp-annulus-symm}. 

We claim that the constraints of these two problems are also equivalent. To show this, we introduce the Fourier series expansions
\begin{equation*}
    \psi(r,\phi) = \sum_{k\in\Z} \hat{\psi}_k(r) e^{ik\phi}
    \quad \text{and} \quad
    \theta(r,\phi) = \sum_{k\in\Z} \hat{\theta}_k(r) e^{ik\phi},
\end{equation*}
where the complex-valued functions $\smash{\hat{\psi}_k}$ and $\smash{\hat{\theta}_k}$ satisfy $\smash{\hat{\psi}_{-k}=\hat{\psi}_k^*}$ and $\smash{\hat{\theta}_{-k}=\hat{\theta}_k^*}$ because $\psi$ and $\theta$ are real-valued. The boundary conditions on $\psi$ and $\theta$ require $\smash{\hat{\theta}_0(\rho)}=0$, $\smash{\hat{\psi}_0(1)} = 0$, $\smash{\hat{\theta}_0(1)}=0$ and
$\smash{\hat{\psi}_k(\rho) = 0}$, $\smash{\hat{\theta}_k(\rho)}=0$, $\smash{\hat{\psi}_k(1)} = 0$, and $\smash{\hat{\theta}_k(1)}=0$ for all $k \neq 0$.
In particular, $\smash{\hat{\psi}_k,\hat{\theta}_k \in H^1_0(\rho,1)}$ for all $k \neq 0$.

Next, we compute the Fourier expansion of the quadratic form $\QF(a,\xi)[\psi,\theta]$ from \cref{e:qf}. Using the orthogonality of the Fourier modes, we find that
\begin{equation*}
    \frac{1}{2\pi}\int_\Omega |\nabla \psi|^2 \dx =
    \sum_{k\in \Z} \int_\rho^1 \left( |\hat{\psi}_k'|^2  + \frac{k^2}{r^2} |\hat{\psi}_k|^2 \right) r\dr.
\end{equation*}
A similar identity holds for $\int |\nabla \theta|^2 \dx$. For the term $\int \xi \langle \nabla^\perp \psi, \nabla \theta\rangle\,\dx$, instead, we first integrate by parts and then expand in Fourier modes to find that
\begin{equation*}
    \frac{1}{2\pi}\int \xi \langle \nabla^\perp \psi, \nabla \theta\rangle\,\dx
    = 
    -\frac{1}{2\pi}\int \theta \langle \nabla^\perp \psi, \nabla \xi\rangle\,\dx
    = 
    \sum_{k\in \Z}  \int_\rho^1  ik \xi' {\hat{\psi}}_k^* \hat{\theta}_{k} \xi' \dr.
\end{equation*}
Upon combining everything, we obtain
\begin{equation*}
    \frac{1}{2\pi}\QF(a,\xi)[\psi,\theta]
    =\sum_{k \in \Z} 
    \int_\rho^1
    a \left( r|\hat{\psi}_k'|^2  + \frac{k^2}{r} |\hat{\psi}_k|^2 \right)
    + \left( r|\hat{\theta}_k'|^2  + \frac{k^2}{r} |\hat{\theta}_k|^2 \right)
    + ik \xi' {\hat{\psi}}_k^* \hat{\theta}_{k} \xi' \dr.
\end{equation*}

Now, write $\psi_k = A_k - iB_k$ and $\theta_k = C_k+iD_k$ for real-valued functions $A_k$, $B_k$, $C_k$, and $D_k$ in $H^1_0(\rho,1)$. Let $\QF_k(a,\xi)$ be the quadratic form in \cref{e:qf-fourier-annulus}. A straightforward calculation yields
\begin{align*}
    \frac{1}{2\pi}\QF(a,\xi)[\psi,\theta]
    &=\sum_{k \in \Z} 
    \QF_k(a,\xi)[A_k,D_k] +
    \QF_k(a,\xi)[B_k,C_k].
\end{align*}
Since $A_k,B_k,C_k,D_k \in H^1_0(\rho,1)$ can be chosen independently, the condition $\QF(a,\xi)\succeq 0$ holds on $\cal{H}\times\Hoz$ if and only if $\QF_k(a,\xi) \succeq 0$ on $H^1_0(\rho,1) \times H^1_0(\rho,1)$ for all $k\in \Z$. This proves our claim that the constraint of \cref{e:ldp} and \cref{e:ldp-annulus-symm} are equivalent when $\Omega$ is an annulus and the heat distribution $f$ is radially symmetric.

% % Acknowledgments
% \subsection*{Acknowledgments}
% Write acknowledgments here if necessary.

% Bibliography
\bibliographystyle{abbrvnat}
\bibliography{reflist}

@article{Goulart2012,
    AUTHOR = {Goulart, Paul J. and Chernyshenko, Sergei},
     TITLE = {Global stability analysis of fluid flows using sum-of-squares},
   JOURNAL = {Phys. D},
  FJOURNAL = {Physica D. Nonlinear Phenomena},
    VOLUME = {241},
      YEAR = {2012},
    NUMBER = {6},
     PAGES = {692--704},
      hISSN = {0167-2789,1872-8022},
       DOI = {10.1016/j.physd.2011.12.008},
       hURL = {https://doi.org/10.1016/j.physd.2011.12.008},
}

@article{Chernyshenko2014a,
     AUTHOR = {Chernyshenko, S. I. and Goulart, P. and Huang, D. and
              Papachristodoulou, A.},
     TITLE = {Polynomial sum of squares in fluid dynamics: a review with a
              look ahead},
   JOURNAL = {Philos. Trans. R. Soc. Lond. Ser. A Math. Phys. Eng. Sci.},
  FJOURNAL = {Philosophical Transactions of the Royal Society of London.
              Series A. Mathematical, Physical and Engineering Sciences},
    VOLUME = {372},
      YEAR = {2014},
    NUMBER = {2020},
     PAGES = {20130350, 18},
      hISSN = {1364-503X,1471-2962},
       DOI = {10.1098/rsta.2013.0350},
       hURL = {https://doi.org/10.1098/rsta.2013.0350},
}

@article{Goluskin2019,
    AUTHOR = {Goluskin, David and Fantuzzi, Giovanni},
     TITLE = {Bounds on mean energy in the {K}uramoto-{S}ivashinsky equation computed using semidefinite programming},
   JOURNAL = {Nonlinearity},
  FJOURNAL = {Nonlinearity},
    VOLUME = {32},
      YEAR = {2019},
    NUMBER = {5},
     PAGES = {1705--1730},
      hISSN = {0951-7715,1361-6544},
       DOI = {10.1088/1361-6544/ab018b},
       hURL = {https://doi.org/10.1088/1361-6544/ab018b},
}

@article{HenrionRudi2025,
    AUTHOR = {Henrion, Didier and Rudi, Alessandro},
     TITLE = {Solving moment and polynomial optimization problems on
              {S}obolev spaces},
   JOURNAL = {SIAM J. Optim.},
  FJOURNAL = {SIAM Journal on Optimization},
    VOLUME = {35},
      YEAR = {2025},
    NUMBER = {2},
     PAGES = {989--1003},
      hISSN = {1052-6234},
       DOI = {10.1137/24M163133X},
       hURL = {https://doi.org/10.1137/24M163133X},
}

@misc{henrion2023infmom,
      title={Infinite-dimensional moment-SOS hierarchy for nonlinear partial differential equations}, 
      author={Didier Henrion and Maria Infusino and Salma Kuhlmann and Victor Vinnikov},
      year={2023},
      howpublished = {\href{https://arxiv.org/abs/2305.18768}{arXiv:2305.18768} [math.OC]},
}

@incollection {Parrilo2013,
    AUTHOR = {Parrilo, Pablo A.},
     TITLE = {Polynomial optimization, sums of squares, and applications},
 BOOKTITLE = {Semidefinite optimization and convex algebraic geometry},
    SERIES = {MOS-SIAM Ser. Optim.},
    VOLUME = {13},
     PAGES = {47--157},
 PUBLISHER = {SIAM, Philadelphia, PA},
      YEAR = {2013},
      ISBN = {978-1-611972-28-3},
      DOI = {10.1137/1.9781611972290.ch3},
      hURL = {https://doi.org/10.1137/1.9781611972290.ch3},
}

@article {Chanillo2025,
    AUTHOR = {Chanillo, Sagun and Malchiodi, Andrea},
     TITLE = {Sharp bounds on the {N}usselt number in
              {R}ayleigh-{B}\'{e}nard convection and a bilinear estimate by
              {C}oifman-{M}eyer},
   JOURNAL = {Invent. Math.},
  FJOURNAL = {Inventiones Mathematicae},
    VOLUME = {240},
      YEAR = {2025},
    NUMBER = {2},
     PAGES = {633--660},
      hISSN = {0020-9910,1432-1297},
   MRCLASS = {76R99},
  MRNUMBER = {4892795},
       DOI = {10.1007/s00222-025-01326-z},
       hURL = {https://doi.org/10.1007/s00222-025-01326-z},
}

@article{fpw2018,
    AUTHOR = {Fantuzzi, Giovanni and Pershin, Anton and Wynn, Andrew},
     TITLE = {Bounds on heat transfer for {B}\'{e}nard-{M}arangoni convection at infinite {P}randtl number},
   JOURNAL = {J. Fluid Mech.},
  FJOURNAL = {Journal of Fluid Mechanics},
    VOLUME = {837},
      YEAR = {2018},
     PAGES = {562--596},
      hISSN = {0022-1120,1469-7645},
       DOI = {10.1017/jfm.2017.858},
       hURL = {https://doi.org/10.1017/jfm.2017.858},
}

@article {Constantin1995,
    AUTHOR = {Constantin, Peter and Doering, Charles R.},
     TITLE = {Variational bounds on energy dissipation in incompressible
              flows. {II}. {C}hannel flow},
   JOURNAL = {Phys. Rev. E (3)},
    VOLUME = {51},
      YEAR = {1995},
    NUMBER = {4, part A},
     PAGES = {3192--3198},
      hISSN = {1539-3755,1550-2376},
       DOI = {10.1103/PhysRevE.51.3192},
       hURL = {https://doi.org/10.1103/PhysRevE.51.3192},
}

@article {Doering1994,
    AUTHOR = {Doering, Charles R. and Constantin, Peter},
     TITLE = {Variational bounds on energy dissipation in incompressible
              flows: shear flow},
   JOURNAL = {Phys. Rev. E (3)},
    VOLUME = {49},
      YEAR = {1994},
    NUMBER = {5, part A},
     PAGES = {4087--4099},
      hISSN = {1539-3755,1550-2376},
       DOI = {10.1103/PhysRevE.49.4087},
       hURL = {https://doi.org/10.1103/PhysRevE.49.4087},
}

@article {Doering1996,
  title = {Variational bounds on energy dissipation in incompressible flows. {III}. {C}onvection},
  author = {Doering, Charles R. and Constantin, Peter},
  journal = {Phys. Rev. E},
  volume = {53},
  issue = {6},
  pages = {5957--5981},
  numpages = {0},
  year = {1996},
  month = {Jun},
  publisher = {American Physical Society},
  doi = {10.1103/PhysRevE.53.5957},
  hURL = {https://link.aps.org/doi/10.1103/PhysRevE.53.5957}
}

@article{Wen2013,
   author = {Baole Wen and Gregory P. Chini and Navid Dianati and Charles Doering},
   DOI = {10.1016/j.physleta.2013.09.009},
   hISSN = {03759601},
   number = {41},
   journal = {Phys. Lett. A},
   month = {12},
   pages = {2931-2938},
   publisher = {Elsevier B.V.},
   title = {Computational approaches to aspect-ratio-dependent upper bounds and heat flux in porous medium convection},
   volume = {377},
   year = {2013},
}

@article{Wen2015,
  title = {{T}ime-stepping approach for solving upper-bound problems: {A}pplication to two-dimensional Rayleigh-B\'enard convection},
  author = {Wen, Baole and Chini, Gregory P. and Kerswell, Rich R. and Doering, Charles R.},
  journal = {Phys. Rev. E},
  volume = {92},
  issue = {4},
  pages = {043012},
  numpages = {13},
  year = {2015},
  month = {Oct},
  publisher = {American Physical Society},
  doi = {10.1103/PhysRevE.92.043012},
  hURL = {https://link.aps.org/doi/10.1103/PhysRevE.92.043012}
}

@Article{Wen2022,
  author   = {Wen, Baole and Ding, Zijing and Chini, Gregory P. and Kerswell, Rich R.},
  title    = {Heat transport in {R}ayleigh-{B}\'{e}nard convection with linear marginality},
  journal  = {Philos. Trans. Roy. Soc. A},
  year     = {2022},
  volume   = {380},
  number   = {2225},
  pages    = {Paper No. 39, 22},
  hISSN     = {1364-503X},
  DOI = {10.1098/rsta.2021.0039},
  fjournal = {Philosophical Transactions of the Royal Society A. Mathematical, Physical and Engineering Sciences},
  mrclass  = {35Q35},
  mrnumber = {4430397},
}

@Article{Ding2019,
  author    = {Ding, Zijing and Marensi, Elena},
  title     = {Upper bound on angular momentum transport in {T}aylor--{C}ouette flow},
  journal   = {Phys. Rev. E},
  year      = {2019},
  volume    = {100},
  number    = {6},
  pages     = {Paper No. 063109},
  month     = dec,
  hISSN      = {2470-0053},
  DOI = {10.1103/physreve.100.063109},
  publisher = {American Physical Society (APS)},
}

@article {Plasting2003,
    AUTHOR = {Plasting, S. C. and Kerswell, R. R.},
     TITLE = {Improved upper bound on the energy dissipation rate in plane {C}ouette flow: the full solution to {B}usse's problem and the {C}onstantin-{D}oering-{H}opf problem with one-dimensional background field},
   JOURNAL = {J. Fluid Mech.},
  FJOURNAL = {Journal of Fluid Mechanics},
    VOLUME = {477},
      YEAR = {2003},
     PAGES = {363--379},
      hISSN = {0022-1120,1469-7645},
       DOI = {10.1017/S0022112002003361},
       hURL = {https://doi.org/10.1017/S0022112002003361},
}

@article {jump,
    AUTHOR = {Lubin, Miles and Dowson, Oscar and Dias Garcia, Joaquim and
              Huchette, Joey and Legat, Beno\^{i}t and Vielma, Juan Pablo},
     TITLE = {Ju{MP} 1.0: {R}ecent improvements to a modeling language for
              mathematical optimization},
   JOURNAL = {Math. Program. Comput.},
  FJOURNAL = {Mathematical Programming Computation},
    VOLUME = {15},
      YEAR = {2023},
    NUMBER = {3},
     PAGES = {581--589},
      hISSN = {1867-2949,1867-2957},
       DOI = {10.1007/s12532-023-00239-3},
       hURL = {https://doi.org/10.1007/s12532-023-00239-3},
}

@article {Alben2017a,
    AUTHOR = {Alben, S.},
     TITLE = {Optimal convection cooling flows in general 2{D} geometries},
   JOURNAL = {J. Fluid Mech.},
  FJOURNAL = {Journal of Fluid Mechanics},
    VOLUME = {814},
      YEAR = {2017},
     PAGES = {484--509},
      hISSN = {0022-1120,1469-7645},
   MRCLASS = {76Rxx},
  MRNUMBER = {3606713},
       DOI = {10.1017/jfm.2017.35},
       hURL = {https://doi.org/10.1017/jfm.2017.35},
}

@article {Alben2017b,
    AUTHOR = {Alben, S.},
     TITLE = {Improved convection cooling in steady channel flows},
   JOURNAL = {Phys. Rev. Fluids},
  FJOURNAL = {Physical Review Fluids},
    VOLUME = {2},
    NUMBER = {10},
      YEAR = {2017},
     PAGES = {Paper No. 104501},
      hISSN = {2469-990X},
       DOI = {10.1103/PhysRevFluids.2.104501},
       hURL = {https://doi.org/10.1103/PhysRevFluids.2.104501},
}

@article {Marcotte2018,
    AUTHOR = {Marcotte, Florence and Doering, Charles R. and Thiffeault, Jean-Luc and Young, William R.},
     TITLE = {Optimal heat transfer and optimal exit times},
   JOURNAL = {SIAM J. Appl. Math.},
  FJOURNAL = {SIAM Journal on Applied Mathematics},
    VOLUME = {78},
      YEAR = {2018},
    NUMBER = {1},
     PAGES = {591--608},
      hISSN = {0036-1399,1095-712X},
   MRCLASS = {80A20 (65K10)},
  MRNUMBER = {3765928},
MRREVIEWER = {Francesca\ Brini},
       DOI = {10.1137/17M1150220},
       hURL = {https://doi.org/10.1137/17M1150220},
}

@article{Iyer2022,
    AUTHOR = {Iyer, Gautam and Van, Truong-Son},
     TITLE = {Bounds on the heat transfer rate via passive advection},
   JOURNAL = {SIAM J. Math. Anal.},
  FJOURNAL = {SIAM Journal on Mathematical Analysis},
    VOLUME = {54},
      YEAR = {2022},
    NUMBER = {2},
     PAGES = {1927--1965},
      hISSN = {0036-1410,1095-7154},
   MRCLASS = {76R05 (60J60)},
  MRNUMBER = {4401800},
       DOI = {10.1137/21M1394497},
       hURL = {https://doi.org/10.1137/21M1394497},
}

@article {Fantuzzi2022,
    AUTHOR = {Fantuzzi, Giovanni and Arslan, Ali and Wynn, Andrew},
     TITLE = {The background method: theory and computations},
   JOURNAL = {Philos. Trans. Roy. Soc. A},
  FJOURNAL = {Philosophical Transactions of the Royal Society A.
              Mathematical, Physical and Engineering Sciences},
    VOLUME = {380},
      YEAR = {2022},
    NUMBER = {2225},
     PAGES = {Paper No. 38, 25},
      hISSN = {1364-503X,1471-2962},
   MRCLASS = {35Q35},
  MRNUMBER = {4430396},
       DOI = {10.1098/rsta.2021.0038},
       hURL = {https://doi.org/10.1098/rsta.2021.0038},
}

@article{Tobasco2022,
    AUTHOR = {Tobasco, Ian},
     TITLE = {Optimal cooling of an internally heated disc},
   JOURNAL = {Philos. Trans. Roy. Soc. A},
  FJOURNAL = {Philosophical Transactions of the Royal Society A.
              Mathematical, Physical and Engineering Sciences},
    VOLUME = {380},
      YEAR = {2022},
    NUMBER = {2225},
     PAGES = {40, 33},
      hISSN = {1364-503X,1471-2962},
   MRCLASS = {35Q35 (80A19)},
  MRNUMBER = {4430398},
       DOI = {10.1098/rsta.2021.0040},
       hURL = {https://doi.org/10.1098/rsta.2021.0040},
}

@article {TobascoDoering2017,
    AUTHOR = {Tobasco, Ian and Doering, Charles R.},
     TITLE = {Optimal Wall-to-Wall Transport by Incompressible Flows},
   JOURNAL = {Phys. Rev. Lett.},
  FJOURNAL = {Physical Review Letters},
    VOLUME = {118,},
      YEAR = {2017},
    NUMBER = {26--30},
     PAGES = {264502},
       DOI = {10.1103/PhysRevLett.118.264502},
       hURL = {https://doi.org/10.1103/PhysRevLett.118.264502},
}

@article {DoeringTobasco2019,
    AUTHOR = {Doering, Charles R. and Tobasco, Ian},
     TITLE = {On the optimal design of wall-to-wall heat transport},
   JOURNAL = {Comm. Pure Appl. Math.},
  FJOURNAL = {Communications on Pure and Applied Mathematics},
    VOLUME = {72},
      YEAR = {2019},
    NUMBER = {11},
     PAGES = {2385--2448},
      hISSN = {0010-3640,1097-0312},
   MRCLASS = {76B75 (49J10 76F25 76R05)},
  MRNUMBER = {4011863},
       DOI = {10.1002/cpa.21832},
       hURL = {https://doi.org/10.1002/cpa.21832},
}

@article {Kumar2024,
    AUTHOR = {Kumar, Anuj},
     TITLE = {Three dimensional branching pipe flows for optimal scalar transport between walls},
   JOURNAL = {Nonlinearity},
  FJOURNAL = {Nonlinearity},
    VOLUME = {37},
      YEAR = {2024},
    NUMBER = {11},
     PAGES = {Paper No. 115011, 55},
      hISSN = {0951-7715,1361-6544},
   MRCLASS = {76D55 (35Q53 49J10 49S05 76R05 80A19 93C20)},
  MRNUMBER = {4814301},
MRREVIEWER = {Seydi\ Battal Gazi Karakoc},
       DOI = {10.1088/1361-6544/ad789e},
       hURL = {https://doi.org/10.1088/1361-6544/ad789e},
}

@article {Hassanzadeh2014,
    AUTHOR = {Hassanzadeh, Pedram and Chini, Gregory P. and Doering, Charles
              R.},
     TITLE = {Wall to wall optimal transport},
   JOURNAL = {J. Fluid Mech.},
  FJOURNAL = {Journal of Fluid Mechanics},
    VOLUME = {751},
      YEAR = {2014},
     PAGES = {627--662},
      hISSN = {0022-1120,1469-7645},
   MRCLASS = {76S05 (76Rxx)},
  MRNUMBER = {3227962},
       DOI = {10.1017/jfm.2014.306},
       hURL = {https://doi.org/10.1017/jfm.2014.306},
}

@article {Motoki2018,
    AUTHOR = {Motoki, Shingo and Kawahara, Genta and Shimizu, Masaki},
     TITLE = {Maximal heat transfer between two parallel plates},
   JOURNAL = {J. Fluid Mech.},
  FJOURNAL = {Journal of Fluid Mechanics},
    VOLUME = {851},
      YEAR = {2018},
     PAGES = {R4, 14},
      hISSN = {0022-1120,1469-7645},
   MRCLASS = {76R05 (76M30)},
  MRNUMBER = {3835063},
       DOI = {10.1017/jfm.2018.557},
       hURL = {https://doi.org/10.1017/jfm.2018.557},
}

@article {Souza2020,
    AUTHOR = {Souza, Andre N. and Tobasco, Ian and Doering, Charles R.},
     TITLE = {Wall-to-wall optimal transport in two dimensions},
   JOURNAL = {J. Fluid Mech.},
  FJOURNAL = {Journal of Fluid Mechanics},
    VOLUME = {889},
      YEAR = {2020},
     PAGES = {A34, 32},
      hISSN = {0022-1120,1469-7645},
   MRCLASS = {76D99 (80A19)},
  MRNUMBER = {4071246},
       DOI = {10.1017/jfm.2020.42},
       hURL = {https://doi.org/10.1017/jfm.2020.42},
}

@article {Shaw2007,
    AUTHOR = {Shaw, Tiffany A. and Thiffeault, Jean-Luc and Doering, Charles
              R.},
     TITLE = {Stirring up trouble: multi-scale mixing measures for steady
              scalar sources},
   JOURNAL = {Phys. D},
  FJOURNAL = {Physica D. Nonlinear Phenomena},
    VOLUME = {231},
      YEAR = {2007},
    NUMBER = {2},
     PAGES = {143--164},
      hISSN = {0167-2789,1872-8022},
   MRCLASS = {76F25 (76F55 76M35)},
  MRNUMBER = {2345774},
       DOI = {10.1016/j.physd.2007.05.001},
       hURL = {https://doi.org/10.1016/j.physd.2007.05.001},
}

@article{DoeringThiffeault2006,
  title = {Multiscale mixing efficiencies for steady sources},
  author = {Doering, Charles R. and Thiffeault, Jean-Luc},
  journal = {Phys. Rev. E},
  volume = {74},
  issue = {2},
  pages = {025301(R)},
  numpages = {4},
  year = {2006},
  month = {Aug},
  doi = {10.1103/PhysRevE.74.025301},
  hURL = {https://link.aps.org/doi/10.1103/PhysRevE.74.025301}
}

@article {Thiffeault2012,
    AUTHOR = {Thiffeault, Jean-Luc},
     TITLE = {Using multiscale norms to quantify mixing and transport},
   JOURNAL = {Nonlinearity},
  FJOURNAL = {Nonlinearity},
    VOLUME = {25},
      YEAR = {2012},
    NUMBER = {2},
     PAGES = {R1--R44},
      hISSN = {0951-7715,1361-6544},
   MRCLASS = {37A25 (35A23 35B45 37-02 46E35 76F25)},
  MRNUMBER = {2876867},
MRREVIEWER = {Marko\ Nedeljkov},
       DOI = {10.1088/0951-7715/25/2/R1},
       hURL = {https://doi.org/10.1088/0951-7715/25/2/R1},
}

@article {Thiffeault2004,
    AUTHOR = {Thiffeault, Jean-Luc and Doering, Charles R. and Gibbon, John D.},
     TITLE = {A bound on mixing efficiency for the advection-diffusion equation},
   JOURNAL = {J. Fluid Mech.},
  FJOURNAL = {Journal of Fluid Mechanics},
    VOLUME = {521},
      YEAR = {2004},
     PAGES = {105--114},
      hISSN = {0022-1120,1469-7645},
       DOI = {10.1017/S0022112004001739},
       hURL = {https://doi.org/10.1017/S0022112004001739},
}

@article {Song2023,
    AUTHOR = {Song, Binglin and Fantuzzi, Giovanni and Tobasco, Ian},
     TITLE = {Bounds on heat transfer by incompressible flows between
              balanced sources and sinks},
   JOURNAL = {Phys. D},
  FJOURNAL = {Physica D. Nonlinear Phenomena},
    VOLUME = {444},
      YEAR = {2023},
     PAGES = {133591, 15},
      hISSN = {0167-2789,1872-8022},
       DOI = {10.1016/j.physd.2022.133591},
       hURL = {https://doi.org/10.1016/j.physd.2022.133591},
}

@article {Shor1987,
    AUTHOR = {Shor, N. Z.},
     TITLE = {Quadratic optimization problems},
   JOURNAL = {Izv. Akad. Nauk SSSR Tekhn. Kibernet.},
  FJOURNAL = {Izvestiya Akademii Nauk SSSR. Tekhnicheskaya Kibernetika},
      YEAR = {1987},
    VOLUME = {222},
    NUMBER = {1},
     PAGES = {128--139},
      hISSN = {0002-3388},
}

@incollection {Nesterov2000,
    AUTHOR = {Nesterov, Yuri and Wolkowicz, Henry and Ye, Yinyu},
     TITLE = {Semidefinite programming relaxations of nonconvex quadratic
              optimization},
 BOOKTITLE = {Handbook of semidefinite programming},
    SERIES = {Internat. Ser. Oper. Res. Management Sci.},
    VOLUME = {27},
     PAGES = {361--419},
 PUBLISHER = {Kluwer Acad. Publ., Boston, MA},
      YEAR = {2000},
      hISBN = {0-7923-7771-0},
   MRCLASS = {90C22 (90C26)},
  MRNUMBER = {1778235},
       DOI = {10.1007/978-1-4615-4381-7_13},
       hURL = {https://doi.org/10.1007/978-1-4615-4381-7_13},
}

@article {Kerswell1998,
    AUTHOR = {Kerswell, R. R.},
     TITLE = {Unification of variational principles for turbulent shear
              flows: the background method of {D}oering-{C}onstantin and the
              mean-fluctuation formulation of {H}oward-{B}usse},
   JOURNAL = {Phys. D},
  FJOURNAL = {Physica D. Nonlinear Phenomena},
    VOLUME = {121},
      YEAR = {1998},
    NUMBER = {1-2},
     PAGES = {175--192},
      hISSN = {0167-2789,1872-8022},
   MRCLASS = {76F99 (35Q35 76D05)},
  MRNUMBER = {1644402},
MRREVIEWER = {Alp\ O.\ Eden},
       DOI = {10.1016/S0167-2789(98)00104-3},
       hURL = {https://doi.org/10.1016/S0167-2789(98)00104-3},
}

@article {Kerswell1999,
    AUTHOR = {Kerswell, R. R.},
     TITLE = {Variational principle for the {N}avier-{S}tokes equations},
   JOURNAL = {Phys. Rev. E},
  FJOURNAL = {Physical Review E. Statistical, Nonlinear, and Soft Matter
              Physics},
    VOLUME = {59},
      YEAR = {1999},
    NUMBER = {5, part B},
     PAGES = {5482--5494},
      hISSN = {1539-3755,1550-2376},
   MRCLASS = {76D05 (76F99 76M30)},
  MRNUMBER = {1690929},
       DOI = {10.1103/PhysRevE.59.5482},
       hURL = {https://doi.org/10.1103/PhysRevE.59.5482},
}

@article {Gertler2025,
    AUTHOR = {Shai Gertler and Zeyu Kuang and Colin Christie and Hao Li and Owen D. Miller},
     TITLE = {Many photonic design problems are sparse {QCQP}s},
   JOURNAL = {Sci. Adv.},
  FJOURNAL = {Science Advances},
    VOLUME = {1},
      YEAR = {2025},
    NUMBER = {1},
     PAGES = {eadl3237},
       DOI = {10.1126/sciadv.adl3237},
       hURL = {https://doi.org/10.1126/sciadv.adl3237},
}

@article {Angeris2019,
    AUTHOR = {Angeris, Guillermo and Vu\v{c}kovi\'{c}, Jelena and Boyd, Stephen P.},
     TITLE = {Computational Bounds for Photonic Design},
   JOURNAL = {ACS Photonics},
  FJOURNAL = {ACS Photonics},
    VOLUME = {6},
      YEAR = {2019},
    NUMBER = {5},
     PAGES = {1232--1239},
       DOI = {10.1021/acsphotonics.9b00154},
       hURL = {https://doi.org/10.1021/acsphotonics.9b00154},
}

@article{Angeris2021,
    author = {Guillermo Angeris and Jelena Vu\v{c}kovi\'{c} and Stephen Boyd},
    journal = {Opt. Express},
    number = {2},
    pages = {2827--2854},
    title = {Heuristic methods and performance bounds for photonic design},
    volume = {29},
    year = {2021},
    hURL = {https://opg.optica.org/oe/abstract.cfm?URI=oe-29-2-2827},
    DOI = {10.1364/OE.415052},
}

@article{Tyburec2019,
    title = {Designing modular 3D printed reinforcement of wound composite hollow beams with semidefinite programming},
    journal = {Materials \& Design},
    volume = {183},
    pages = {108131},
    year = {2019},
    doi = {https://doi.org/10.1016/j.matdes.2019.108131},
    hURL = {https://www.sciencedirect.com/science/article/pii/S0264127519305696},
    author = {M. Tyburec and J. Zeman and J. Novák and M. Lepš and T. Plachý and R. Poul},
}

@article {Dalklint2025,
    AUTHOR = {Dalklint, Anna and Christiansen, Rasmus E. and Sigmund, Ole},
     TITLE = {On the potential use of performance bounds based on semidefinite programs for topology optimization},
   JOURNAL = {Struct. Multidiscip. Optim.},
  FJOURNAL = {Structural and Multidisciplinary Optimization},
    VOLUME = {68},
      YEAR = {2025},
    NUMBER = {7},
     PAGES = {Paper No. 144},
       DOI = {10.1007/s00158-025-04073-0},
       hURL = {https://doi.org/10.1007/s00158-025-04073-0},
}

@book {GiraultRaviart1979,
    AUTHOR = {Girault, V. and Raviart, P.-A.},
     TITLE = {Finite element approximation of the {N}avier-{S}tokes
              equations},
    SERIES = {Lecture Notes in Mathematics},
    VOLUME = {749},
 PUBLISHER = {Springer-Verlag, Berlin-New York},
      YEAR = {1979},
     PAGES = {vii+200},
      hISBN = {3-540-09557-8},
}

@incollection {Coifman1990,
    AUTHOR = {Coifman, R. and Lions, P.-L. and Meyer, Y. and Semmes, S.},
     TITLE = {Compacit\'{e} par compensation et espaces de {H}ardy},
 BOOKTITLE = {S\'{e}minaire sur les \'{E}quations aux {D}\'{e}riv\'{e}es
              {P}artielles, 1989--1990},
     PAGES = {Exp. No. XIV, 10},
 PUBLISHER = {\'{E}cole Polytech., Palaiseau},
      YEAR = {1990},
      ISBN = {2-73-020211-0},
   MRCLASS = {35A25 (42B30 46E15)},
  MRNUMBER = {1073189},
MRREVIEWER = {Nicolas\ Lerner},
}

@misc{Clarabel2024,
      title={Clarabel: An interior-point solver for conic programs with quadratic objectives}, 
      author={Paul J. Goulart and Yuwen Chen},
      year={2024},
      howpublished = {arXiv:2405.12762 [math.OC]}
}

@Conference{SumOfSquares2,
  author    = {Weisser, Tillmann and Legat, Beno{\^\i}t and Coey, Chris and Kapelevich, Lea and Vielma, Juan Pablo},
  title     = {Polynomial and Moment Optimization in Julia and JuMP},
  booktitle = {JuliaCon},
  year      = {2019},
  hURL       = {https://pretalx.com/juliacon2019/talk/QZBKAU/},
}

@Conference{SumOfSquares1,
  author    = {Legat, Beno{\^\i}t and Coey, Chris and Deits, Robin and Huchette, Joey and Perry, Amelia},
  title     = {{Sum-of-squares optimization in Julia}},
  booktitle = {The First Annual JuMP-dev Workshop},
  year      = {2017},
}

@manual{mosek,
   author = "MOSEK ApS",
   title = "The MOSEK Optimizer API for Julia. Version 11.1.7",
   year = 2025,
   hURL = "https://docs.mosek.com/latest/juliaapi/index.html",
 }

@article {Chernyavsky2023,
    AUTHOR = {Chernyavsky, Alexander and Bramburger, Jason J. and Fantuzzi,
              Giovanni and Goluskin, David},
     TITLE = {Convex relaxations of integral variational problems: pointwise
              dual relaxation and sum-of-squares optimization},
   JOURNAL = {SIAM J. Optim.},
  FJOURNAL = {SIAM Journal on Optimization},
    VOLUME = {33},
      YEAR = {2023},
    NUMBER = {2},
     PAGES = {481--512},
      hISSN = {1052-6234,1095-7189},
       DOI = {10.1137/21M1455127},
       hURL = {https://doi.org/10.1137/21M1455127},
}

@article {Valmorbida2016,
    AUTHOR = {Valmorbida, Giorgio and Ahmadi, Mohamadreza and
              Papachristodoulou, Antonis},
     TITLE = {Stability analysis for a class of partial differential
              equations via semidefinite programming},
   JOURNAL = {IEEE Trans. Automat. Control},
  FJOURNAL = {Institute of Electrical and Electronics Engineers.
              Transactions on Automatic Control},
    VOLUME = {61},
      YEAR = {2016},
    NUMBER = {6},
     PAGES = {1649--1654},
     hISSN = {0018-9286,1558-2523},
       DOI = {10.1109/TAC.2015.2479135},
       hURL = {https://doi.org/10.1109/TAC.2015.2479135},
}

@incollection{Korda2022,
    AUTHOR = {Korda, Milan and Henrion, Didier and Lasserre, Jean Bernard},
     TITLE = {Moments and convex optimization for analysis and control of
              nonlinear {PDE}s},
 BOOKTITLE = {Numerical control. {P}art {A}},
    SERIES = {Handb. Numer. Anal.},
    VOLUME = {23},
     PAGES = {339--366},
 PUBLISHER = {North-Holland, Amsterdam},
      YEAR = {2022},
      hISBN = {978-0-323-85059-9},
}

@book{Brezis2011,
    AUTHOR = {Brezis, Haim},
     TITLE = {Functional analysis, {S}obolev spaces and partial differential
              equations},
    SERIES = {Universitext},
 PUBLISHER = {Springer, New York},
      YEAR = {2011},
     PAGES = {xiv+599},
      ISBN = {978-0-387-70913-0},
   MRCLASS = {35-01 (46-01 46E35 46N20 47F05)},
  MRNUMBER = {2759829},
MRREVIEWER = {Vicen\c{t}iu\ D.\ R\u{a}dulescu},
}

@article {Lasserre2024,
    AUTHOR = {Lasserre, Jean B.},
     TITLE = {The moment-{SOS} hierarchy: applications and related topics},
   JOURNAL = {Acta Numer.},
  FJOURNAL = {Acta Numerica},
    VOLUME = {33},
      YEAR = {2024},
     PAGES = {841--908},
     hISSN = {0962-4929,1474-0508},
       DOI = {10.1017/S0962492923000053},
       hURL = {https://doi.org/10.1017/S0962492923000053},
}

@book {Lasserre2010,
    AUTHOR = {Lasserre, Jean Bernard},
     TITLE = {Moments, positive polynomials and their applications},
    SERIES = {Imperial College Press Optimization Series},
    VOLUME = {1},
 PUBLISHER = {Imperial College Press, London},
      YEAR = {2010},
     PAGES = {xxii+361},
      ISBN = {978-1-84816-445-1; 1-84816-445-9},
      DOI = {https://doi.org/10.1142/p665},
}

@incollection {Laurent2009,
    AUTHOR = {Laurent, Monique},
     TITLE = {Sums of squares, moment matrices and optimization over
              polynomials},
 BOOKTITLE = {Emerging applications of algebraic geometry},
    SERIES = {IMA Vol. Math. Appl.},
    VOLUME = {149},
     PAGES = {157--270},
 PUBLISHER = {Springer, New York},
      YEAR = {2009},
      ISBN = {978-0-387-09685-8},
       DOI = {10.1007/978-0-387-09686-5_7},
       hURL = {https://doi.org/10.1007/978-0-387-09686-5_7},
}

@Article{Bellavia2021,
  author     = {Bellavia, Stefania and Gondzio, Jacek and Porcelli, Margherita},
  title      = {A relaxed interior point method for low-rank semidefinite programming problems with applications to matrix completion},
  journal    = {J. Sci. Comput.},
  year       = {2021},
  volume     = {89},
  number     = {2},
  pages      = {Paper No. 46, 36},
  hISSN       = {0885-7474},
  DOI        = {10.1007/s10915-021-01654-1},
  fjournal   = {Journal of Scientific Computing},
  mrclass    = {90C22 (15A83 65F99 90C51)},
  mrnumber   = {4323154},
  mrreviewer = {Zhongwen Chen},
}

@Article{Habibi2023,
  author   = {Habibi, Soodeh and Ko\v{c}vara, Michal and Stingl, Michael},
  title    = {Loraine -- an interior-point solver for low-rank semidefinite programming},
  journal  = {Optim. Methods Softw.},
  year     = {2023},
  pages    = {1--31},
  month    = oct,
  hISSN     = {1029-4937},
  DOI      = {10.1080/10556788.2023.2250522},
  fjournal = {Optimization Methods and Software},
}

@Article{Souto2022,
  author     = {Souto, Mario and Garcia, Joaquim D. and Veiga, \'{A}lvaro},
  title      = {Exploiting low-rank structure in semidefinite programming by approximate operator splitting},
  journal    = {Optimization},
  year       = {2022},
  volume     = {71},
  number     = {1},
  pages      = {117--144},
  hISSN       = {0233-1934},
  DOI        = {10.1080/02331934.2020.1823387},
  fjournal   = {Optimization. A Journal of Mathematical Programming and Operations Research},
  mrclass    = {90C22 (90C25)},
  mrnumber   = {4375510},
  mrreviewer = {Meijia Yang},
  hURL        = {https://DOI.org/10.1080/02331934.2020.1823387},
}

@Article{Burer2006,
  author     = {Burer, Samuel and Choi, Changhui},
  title      = {Computational enhancements in low-rank semidefinite programming},
  journal    = {Optim. Methods Softw.},
  year       = {2006},
  volume     = {21},
  number     = {3},
  pages      = {493--512},
  hISSN       = {1055-6788},
  DOI        = {10.1080/10556780500286582},
  fjournal   = {Optimization Methods \& Software},
  mrclass    = {90C22},
  mrnumber   = {2197509},
  mrreviewer = {Franz Rendl},
}

@Article{Burer2003,
  author     = {Burer, Samuel and Monteiro, Renato D. C.},
  title      = {A nonlinear programming algorithm for solving semidefinite programs via low-rank factorization},
  journal    = {Math. Program.},
  year       = {2003},
  volume     = {95},
  number     = {2},
  pages      = {329--357},
  hISSN       = {0025-5610},
  DOI        = {10.1007/s10107-002-0352-8},
  fjournal   = {Mathematical Programming. A Publication of the Mathematical Programming Society},
  mrclass    = {90C22 (90C30)},
  mrnumber   = {1976484},
  mrreviewer = {Zheng Hai Huang},
}

@Article{Monteiro2026,
    author={Monteiro, Renato D. C.
    and Sujanani, Arnesh
    and Cifuentes, Diego},
    title={A Low-rank augmented lagrangian method for large-scale semidefinite programming based on a hybrid convex-nonconvex approach},
    journal={Mathematical Programming},
    year={2026},
    month={Apr},
    day={29},
    hissn={1436-4646},
    doi={10.1007/s10107-026-02338-0},
    hURL={https://doi.org/10.1007/s10107-026-02338-0}
}

%%%%%%%%%%%%%%%%%%%%%%%%%%%%%%%%%%%%%%%%%%%%%%%%%%%%%%%%%%%%%%%%%%%%%%%%%%%%%%%%
%%%%%%%%%%%%%%%%%%%%%%%%%%%%%%%%%%%%%%%%%%%%%%%%%%%%%%%%%%%%%%%%%%%%%%%%%%%%%%%%
\end{document}